\numberwithin{equation}{section}
\theoremstyle{plain}
\newtheorem{theorem}{Theorem}
\newtheorem{lemma}{Lemma}
\newtheorem{corollary}{Corollary}
\newtheorem{definition}{Definition}
\newtheorem{remark}{Remark}
\def\bE{\mathbb E}
\def\bP{\mathbb P}
\def\bN{\mathbb N}
\def\bC{\mathbb C}
\def\bM{\mathbb M}
\def\cA{\mathcal A}
\def\cH{\mathcal H}
\def\cL{\mathcal L}
\def\cX{\mathcal X}
\def\id{\mathrm {Id}}
\def\tr{\mathrm {tr}}
\def\Cov{\mathrm {Cov}}
\def\free{\mathrm {free}}
\def\sp{\mathrm {sp}}
\begin{document}

\title[Strong convergence for tensor GUE]{Strong convergence for tensor GUE random matrices}
\author[B. Collins]{Beno\^{i}t Collins}
\address{Department of Mathematics, Graduate School of Science,
Kyoto University, Kyoto 606-8502, Japan}
\email{collins@math.kyoto-u.ac.jp}

\author[W. Yuan]{Wangjun Yuan}
\address{University of Luxembourg, Department of Mathematics, Maison du Nombre 6, Avenue de la Fonte L-4364 Esch-sur-Alzette. Luxembourg.}
\email{ywangjun@connect.hku.hk}

\vspace{-2cm}
\vspace{-1cm}

\begin{abstract}
%{\red To be filled.}
Haagerup and Thorbj{\o}rnsen proved that iid GUEs converge strongly to free semicircular elements as the dimension grows to infinity. 
Motivated by considerations from quantum physics -- in particular, understanding nearest neighbor interactions in quantum spin systems --  we consider iid GUE acting on multipartite state spaces, with a mixing component on some sites and identity on the remaining sites.
We show that under proper assumptions on the dimension of the sites, strong asymptotic freeness still holds.
Our proof relies on an interpolation technology recently introduced by Bandeira, Boedihardjo and van Handel.

\medskip\noindent\textbf{Keywords.} Gaussian unitary ensemble; strong asymptotic freeness; tensor \smallskip

\noindent\textbf{AMS 2020 Subject Classifications.} 15B52, 60B20, 47A80
\end{abstract}

\maketitle

\section{Introduction and statement of main result}

\subsection{Motivating considerations}
%%%%%%
A typical problem in multi-matrix random matrix theory consists in considering a sequence (indexed by a dimension) of a $d$-tuple of random matrices, consider a fixed non-commutative function in $d$ abstract variables and apply it to this sequence of random matrices, and study its properties in the large dimension limit. Multiple breakthrough related to the study of such problems have arisen from the use of free probability theory, after Voiculescu observed in \cite{Voi1991} in 1991, the asymptotic freeness of independent GUE as the dimension goes to infinity. 

In the case of asymptotic freeness, the convergence of the spectrum is primarily being considered, however, many other ``measurables'' are also considered, such as the operator norm.
This has been the motivation of strong asymptotic freeness with the first major achievement with Haagerup and Thorbj\o rnsen \cite{Haagerup2005}, and subsequently, many other variants. 
Among subsequent variants, let us mention  \cite{CMale} where it is proved that independent unitaries are strongly asymptotically free from constant matrices. 
Later, the authors of  \cites{Bordenave2019,Bordenave2023,Bordenave2024}
proved strong freeness involving random permutations and also large tensors . 
The tensor setup is actually the one in which we are interested, motivated by quantum mechanics.
 
Let us outline a motivating example based on the paper \cite{Siddhardh2019}, capturin the typical random matrix theoretical problems of interest to quantum many body system. 
We are interested in the spectrum of a matrix
\begin{align} \label{eq-sum}
    \sum_{i=1}^kX_i,
\end{align}
where $X$ is a random matrix (e.g. a GUE) acting on $M_n(\mathbb{C})^{\otimes 2}$
and $X_i$ is the version of $X$ acting on the leg $i,i+1$ of $M_n(\mathbb{C})^{\otimes k+1}$.
We refer to \eqref{eq-tilde otimes} for definitions.

It was observed by the authors of the above paper and also in \cite{Charlesworth2021}
that epsilon-freeness is the right concept for such models -- in the case where $X_i$ emanate from a different i.i.d. copy of $X$ for each $i$.
However, strong convergence for models introduced in \cite{Charlesworth2021} in the context of epsilon-freeness and of the paper \cite{Siddhardh2019} remains elusive. 
On the other hand, and quite unexpectedly, this turns out to be also an important problem in random geometry, see \cites{Magee2023}.
%quote https://arxiv.org/abs/2308.00863

It turns out that we are not able to handle the problem in such generality at that point, because techniques that are available with large tensors (developed by \cite{Bordenave2024}) do not support large rank tensor product. 
The only paper supporting in a robust way large rank tensor product is the fundamental recent paper by \cite{vH2024}.
In this paper, they obtain strong convergence for Gaussian models with completely explicit numerical bounds, which are, under some assumptions, compatible with large matrix tensor rank. 
This is the technology on which we rely. However, we had to restrict our considerations in this manuscript to the following setup:

(1) Unlike in the original model from physics, the dimensions of the various systems are not equal.

(2) Each summand in the model has to be independent from the others. 

In another vein, it is interesting to compare our manuscript with \cite{Mingo2019} 
in which it is proved that a unitary matrix and its partial transpose are asymptotically free, in the sense that we achieve freeness with less randomness than initially required by Voiculescu (in \cite{Mingo2019}, taking the transpose -- a deterministic operation -- yields freeness, whereas for us, changing a small dimension leg yields strong asymptotic freeness). The common point between \cite{Mingo2019} and our manuscript is that in both cases there is a tensor structure and a modification of the model on a subsystem.

\subsection{Statement of the results and organization}

Let $N,m$ be positive integers. We denote $[m] = \{1,2,\ldots,m\}$. For any $J \subseteq [m]$, we denote by $\{a_1,\ldots a_{|J|}\}$ the elements in $J$ with a certain order, and $|J|$ the number of elements in $J$. Let $X$ be a Gaussian unitary ensemble (GUE) matrix on $\bM_{N^{|J|}}(\bC)$. Note that $\bM_{N^{|J|}}(\bC) = \bM_N(\bC)^{\otimes |J|}$, we have the following decomposition of $X_J$:
\begin{align*}
    X = \sum_{i_1,\ldots,i_{|J|}, j_1,\ldots j_{|J|}=1}^N x_{i_1\ldots i_{|J|}j_1\ldots j_{|J|}} E_{i_1j_1} \otimes \ldots \otimes E_{i_{|J|}j_{|J|}},
\end{align*}
where the family $\{E_{ij}:1 \le i,j \le N\}$ is a basis of $\bM_N(\bC)$. Now we introduce the notation $X \tilde \otimes_J I_N^{\otimes (m-|J|)}$ as follows:
\begin{align*}
    X \tilde \otimes_J I_N^{\otimes (m-|J|)} = \sum_{i_1,\ldots,i_{|J|}, j_1,\ldots j_{|J|}=1}^N x_{i_1\ldots i_{|J|}j_1\ldots j_{|J|}} I_N \otimes \ldots \otimes E_{i_1j_1} \otimes \ldots \otimes E_{i_{|J|}j_{|J|}} \otimes \ldots \otimes I_N,
\end{align*}
where the base matrices $E_{i_1j_1}, \ldots E_{i_{|J|}j_{|J|}}$ are in the position $a_1,\ldots, a_{|J|}$ respectively. We will introduce the notation $\tilde \otimes_J$ with more details in Section \ref{sec:tensor}.

%question from Benoit: can $k$ be arbitrary, i.e. can the map i\to J_i be non-injective ? (I think that yes but can you confirm?)
%Wangjun's answer: As sets, $J_1,\ldots,J_k$ need not to be different. However, even if $J_1=J_2$, the GUEs $X_{J_1},X_{J_2}$ are different. They are independent with the same legs.
For $1 \le i \le k$, let $J_i$ be a subset of $[m]$ such that $|J_i|>m/2$. We define
\begin{align*}
    \alpha = \min_{1 \le i \le k} (2|J_i|-m).
\end{align*}
Let $X_{J_i}$ be a GUE matrix on $\bM_{N^{|J_i|}}(\bC)$, and $B_i$ be a deterministic $d \times d$ self-adjoint matrix. Consider the following model
\begin{align} \label{eq-model}
    X_N = \sum_{i=1}^k B_i \otimes \left( X_{J_i} \tilde\otimes_{J_i} I_N^{\otimes (m-|J_i|)} \right).
\end{align}

Let $\{s_1, \ldots, s_k\}$ be a family of free semicircular elements, and define
\begin{align} \label{eq-model-free}
    X_\free = \sum_{i=1}^k B_i \otimes s_i.
\end{align}

Let $\iota$ be a mapping that maps a matrix to the column vector of its entries.
We have the following result on the spectrum of $X_N$ and $X_\free$.
\begin{theorem} \label{Thm-1}
Assume that there exist parameters $\Gamma, \Theta$ that may depend on $N$, such that
\begin{align*}
    \left\| \sum_{i=1}^k \left( B_i \right)^2 \right\| \le \Gamma, \quad
    \sum_{i=1}^k \left\| \iota (B_i) \iota (B_i)^* \right\| \le \Theta.
\end{align*}
Then for all $\alpha,N,k$, for all $t \ge 0$, we have
\begin{align} \label{ineq-spectrum}
    \bP \left( \sp(X_N) \subseteq \sp(X_{\free}) + C N^{-\alpha/4} \Gamma^{1/4} \Theta^{1/4} \left( \ln^{3/4} \left( dN^m \right) + t \right) [-1,1] \right) \ge 1 - e^{-t^2},
\end{align}
and
\begin{align} \label{ineq-norm}
    \bP \left( \left\| X_N \right\| > \left\| X_{\free} \right\| + C N^{-\alpha/4} \Gamma^{1/4} \Theta^{1/4} \left( \ln^{3/4} \left( dN^m \right) + t \right) \right)
    \le e^{-t^2},
\end{align}
for a universal positive constant $C$.
\end{theorem}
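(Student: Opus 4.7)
The plan is to apply the master inequality of Bandeira--Boedihardjo--van Handel \cite{vH2024} directly to $X_N$, viewed as a self-adjoint Gaussian matrix with values in $\bM_d(\bC)\otimes\bM_N(\bC)^{\otimes m}$. Writing $X_N = \sum_{i,I,J} g_{i,I,J}\, A_{i,I,J}$ with independent centred real Gaussian coordinates $g_{i,I,J}$ extracted from the entries of $X_{J_i}$, and self-adjoint matrix coefficients $A_{i,I,J}$ of the form $B_i \otimes (I_N \otimes \cdots \otimes E_{I,J} \otimes \cdots \otimes I_N)$ with $E_{I,J}$ placed on the sites of $J_i$, the BBvH master inequality asserts, in its quantitative form,
\begin{equation*}
\bP\!\left( \|X_N\| > \|X_\free\| + C\,\sigma_*(X_N)^{1/2}\, v(X_N)^{1/2}\,\ln^{3/4}(dN^m) + C\,v(X_N)\, t \right) \le e^{-t^2},
\end{equation*}
where $\sigma_*(X_N)$ is the free-covariance parameter (equal to $\|X_\free\|$) and $v(X_N)$ is the fourth-order ``small covariance'' parameter attached to the matrix coefficients. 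The proof then reduces to bounding $\sigma_*(X_N)$ and $v(X_N)$ in terms of $\Gamma$, $\Theta$, and $N^{-\alpha}$.

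The bound $\sigma_*(X_N) \le 2\,\Gamma^{1/2}$ follows directly from the standard free-probability estimate $\|\sum_i B_i \otimes s_i\| \le 2\|\sum_i B_i^2\|^{1/2}$ for free semicircular elements. The core of the argument is the estimate $v(X_N)^2 \lesssim N^{-\alpha}\,\Theta$. By independence of the $X_{J_i}$ only diagonal $i=j$ terms contribute to the relevant covariance; for each fixed $i$, the covariance kernel of $X_{J_i}\,\tilde\otimes_{J_i}\, I_N^{\otimes(m-|J_i|)}$ is a flip operator on the $J_i$ legs tensored with identities on the remaining $m-|J_i|$ legs. Unfolding the definition of $v$ as a supremum of a bilinear contraction of two such kernels produces partial traces along the identity legs whose net effect -- once combined with the ambient GUE normalisation -- is a factor of $N^{-(2|J_i|-m)} \le N^{-\alpha}$; the weight $\Theta$ then arises from the summation over $i$ via the column-vectorisation identity $\sum_i \iota(B_i)\iota(B_i)^*$.

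Substituting the two parameter estimates into the master inequality produces the scale $N^{-\alpha/4}\Gamma^{1/4}\Theta^{1/4}(\ln^{3/4}(dN^m)+t)$ and thus proves \eqref{ineq-norm}. The spectrum inclusion \eqref{ineq-spectrum} is then obtained by the standard route of \cite{vH2024}: apply the norm bound to the shifted resolvents $(\lambda - X_N)^{-1}$ on a sufficiently fine net of the complement of $\sp(X_\free)$, and union-bound.

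The principal obstacle is the $v(X_N)$ estimate. The BBvH $v$-parameter is a fourth-order covariance-of-covariance object, and in our tensor-with-identity setup the bookkeeping over the distinct legs -- which contribute factors of $N$ via the GUE normalisation, which are killed by partial traces against identities, and which survive as flip contractions -- must be carried out with care. It is precisely in this step that the hypothesis $|J_i|>m/2$ is decisive: without it, the factors of $N$ from the identity legs overwhelm the GUE normalisation and no dimensional suppression remains.
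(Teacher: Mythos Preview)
Your outline has the right shape---compute $\sigma$ and $v$ for $X_N$, plug into the BBvH master inequality---but there are two genuine gaps and one conceptual confusion.

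\textbf{Wrong identification of the free model.} The BBvH inequality you invoke compares $X_N=\sum_{i,j} g_{i,j}\,B_i\otimes(A_{i,j}\tilde\otimes_{J_i} I)$ with the object obtained by replacing each Gaussian $g_{i,j}$ by an independent semicircular $s_{i,j}$. That object is what the paper calls $X_{N,\mathrm{free}}$, \emph{not} $X_{\mathrm{free}}=\sum_i B_i\otimes s_i$. Your bound $\sigma_*(X_N)\le 2\|\sum_i B_i^2\|^{1/2}$ is a bound on $\|X_{\mathrm{free}}\|$, but what BBvH produces is $\|X_{N,\mathrm{free}}\|$. The paper closes this gap in a separate step (Lemma~\ref{lem-BBvH 7.9}): since each block $X_{J_i}\tilde\otimes_{J_i} I$ is centred with second moment $I_{N^m}$, one has $\mathrm{sp}(X_{N,\mathrm{free}})=\mathrm{sp}(X_{\mathrm{free}})$. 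Without this identification your inequality does not reach the stated $X_{\mathrm{free}}$.

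\textbf{Misreading of the parameter $v$.} In BBvH, $v(X_N)^2=\|\mathrm{Cov}(\iota(X_N))\|=\bigl\|\sum_{i,j}\iota(A_{i,j}')\iota(A_{i,j}')^*\bigr\|$ is a second-order quantity (the operator norm of the entrywise covariance), not a ``fourth-order covariance-of-covariance''. Your flip-operator/partial-trace heuristic is a description of $\mathrm{Cov}(X_N)$ as an operator on $\bM_{N^m}$, but the parameter that enters is its operator norm as an operator on the \emph{vectorised} space $\bC^{N^{2m}}$. The paper computes this directly: $\sum_j\iota(A_{i,j})\iota(A_{i,j})^*=I_{N^{2|J_i|}}$ for the GUE basis, then the identity legs contribute $\|\iota(I_N^{\otimes(m-|J_i|)})\iota(I_N^{\otimes(m-|J_i|)})^*\|=N^{m-|J_i|}$, and together with the GUE normalisation $N^{-|J_i|}$ this yields the factor $N^{m-2|J_i|}\le N^{-\alpha}$. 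Your sketch does not make this computation, and the ``bilinear contraction of two kernels'' language suggests you are computing the wrong object.

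\textbf{Order of the two conclusions.} In BBvH (and in the paper) the spectral inclusion \eqref{ineq-spectrum} is the primary statement and the norm bound \eqref{ineq-norm} is its corollary. Your plan to deduce the spectrum inclusion from the norm bound via resolvents on a net is unnecessary and would introduce extra log factors; simply cite BBvH's spectral form directly.
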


If we choose $t = \ln^{3/4} (dN^m)$ in Theorem \ref{Thm-1}, we have the following corollary:
\begin{corollary} \label{Coro-1}
Suppose that the assumptions in Theorem \ref{Thm-1} hold. Then for all $\alpha,N,k$, we have
\begin{align*}
    \bP \left( \sp(X_N) \subseteq \sp(X_{\free}) + C N^{-\alpha/4} \Gamma^{1/4} \Theta^{1/4} \ln^{3/4} \left( dN^m \right) [-1,1] \right) \ge 1 - e^{-\ln^{3/2} (dN^m)},
\end{align*}
and
\begin{align*}
    \bP \left( \left\| X_N \right\| > \left\| X_{\free} \right\| + C N^{-\alpha/4} \Gamma^{1/4} \Theta^{1/4} \ln^{3/4} \left( dN^m \right) \right)
    \le e^{-\ln^{3/2} (dN^m)},
\end{align*}
for a universal positive constant $C$.
\end{corollary}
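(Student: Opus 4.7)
The plan is to derive both inequalities of Corollary~\ref{Coro-1} as direct specializations of Theorem~\ref{Thm-1} by substituting a specific value of the free parameter $t$. Since Theorem~\ref{Thm-1} is stated uniformly over all $t\ge 0$, the corollary is obtained by plugging in $t = \ln^{3/4}(dN^m)$ in \eqref{ineq-spectrum} and \eqref{ineq-norm} and simplifying.

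With this choice, the deterministic deviation in both bounds becomes
\begin{align*}
    C N^{-\alpha/4}\Gamma^{1/4}\Theta^{1/4}\bigl(\ln^{3/4}(dN^m)+t\bigr)
    =2C\, N^{-\alpha/4}\Gamma^{1/4}\Theta^{1/4}\ln^{3/4}(dN^m),
\end{align*}
and the multiplicative factor $2$ is absorbed into the universal constant, which I continue to denote by $C$ by a harmless abuse of notation. Simultaneously, the Gaussian-type tail $e^{-t^2}$ becomes $e^{-\ln^{3/2}(dN^m)}$, which matches the exceptional probability stated in the corollary. Applied to \eqref{ineq-spectrum} this yields the spectral inclusion, and applied to \eqref{ineq-norm} it yields the operator norm bound.

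There is no genuine obstacle here: the entire content of the corollary is already contained in Theorem~\ref{Thm-1}, and the proof consists of a single substitution plus relabeling of the constant. The purpose of isolating Corollary~\ref{Coro-1} as a separate statement is mainly packaging: when one wishes to let the ambient dimension parameter $dN^m$ tend to infinity and deduce strong asymptotic freeness (so that in particular $e^{-\ln^{3/2}(dN^m)}$ is summable along any reasonable subsequence via a Borel--Cantelli argument), it is convenient to have a ready-made bound in which both the deviation scale and the tail exponent are expressed purely in terms of $\ln(dN^m)$.
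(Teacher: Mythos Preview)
Your proof is correct and follows exactly the approach indicated in the paper: the corollary is obtained from Theorem~\ref{Thm-1} by the single substitution $t=\ln^{3/4}(dN^m)$, with the resulting factor of $2$ absorbed into the universal constant.
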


In particular, by Borel-Cantelli's lemma, we have the next corollary:
\begin{corollary} \label{Coro-2}
Suppose that the assumptions in Theorem \ref{Thm-1} hold. Assume that
\begin{align*}
    \lim_{N \to \infty} N^{-\alpha} \Gamma \Theta \ln^3 \left( dN^m \right) = 0,
\end{align*}
then for any $\epsilon>0$, we have almost surely,
\begin{align}
    \sp(X_N) \subseteq \sp(X_{\free}) + (-\epsilon,\epsilon) \quad \mathrm{and} \quad
    \left\| X_N \right\| \le \left\| X_{\free} \right\| + \epsilon,
\end{align}
eventually as $N \to \infty$.
\end{corollary}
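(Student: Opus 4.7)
The plan is to deduce Corollary~\ref{Coro-2} directly from Corollary~\ref{Coro-1} by a standard Borel--Cantelli argument; no fresh input is required beyond the quantitative tail bounds already established. Fix any $\epsilon>0$. The hypothesis $\lim_{N\to\infty} N^{-\alpha}\Gamma\Theta\ln^3(dN^m)=0$ says precisely that the deterministic quantity
\begin{align*}
\epsilon_N := C N^{-\alpha/4}\Gamma^{1/4}\Theta^{1/4}\ln^{3/4}(dN^m)
\end{align*}
appearing in Corollary~\ref{Coro-1} tends to $0$. Hence there is some $N_0=N_0(\epsilon)$ with $\epsilon_N<\epsilon$ for all $N\ge N_0$, and for such $N$ the inclusion $\sp(X_\free)+\epsilon_N[-1,1]\subseteq \sp(X_\free)+(-\epsilon,\epsilon)$ together with Corollary~\ref{Coro-1} yields
\begin{align*}
\bP\bigl(\sp(X_N)\not\subseteq \sp(X_\free)+(-\epsilon,\epsilon)\bigr) \le e^{-\ln^{3/2}(dN^m)},
\end{align*}
and analogously $\bP(\|X_N\|>\|X_\free\|+\epsilon)\le e^{-\ln^{3/2}(dN^m)}$.

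The next step is to verify summability of these tail probabilities. Rewriting
\begin{align*}
e^{-\ln^{3/2}(dN^m)} = (dN^m)^{-\ln^{1/2}(dN^m)},
\end{align*}
one sees that for $N$ large enough $\ln^{1/2}(dN^m)\ge 2$, so the bound is at most $(dN^m)^{-2}\le N^{-2m}$, which is summable in $N$. Consequently $\sum_{N\ge N_0} e^{-\ln^{3/2}(dN^m)}<\infty$, and the Borel--Cantelli lemma implies that almost surely only finitely many of the two families of exceptional events occur. On the corresponding full-measure set, both $\sp(X_N)\subseteq \sp(X_\free)+(-\epsilon,\epsilon)$ and $\|X_N\|\le \|X_\free\|+\epsilon$ hold eventually as $N\to\infty$.

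To obtain the conclusion simultaneously for every $\epsilon>0$ on a single full-measure event, one applies the preceding argument to the countable sequence $\epsilon=1/j$, $j\in\bN$, and intersects the resulting null sets; by monotonicity in $\epsilon$ the claim then follows for arbitrary $\epsilon>0$. Honestly, there is no genuine obstacle here: the tail bound from Corollary~\ref{Coro-1} decays faster than any polynomial in $N$, which is well beyond what Borel--Cantelli demands, so the argument is entirely routine and robust. The only conceptual point worth naming is that the quantitative strength of Corollary~\ref{Coro-1}, namely the $\ln^{3/2}(dN^m)$ exponent rather than a mere $\ln(dN^m)$, is what lets us afford an arbitrary $\epsilon$ simultaneously for all $N$ without any additional logarithmic factors in the hypothesis.
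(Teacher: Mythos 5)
Your proof is correct and follows exactly the route the paper intends: the paper derives Corollary~\ref{Coro-2} from Corollary~\ref{Coro-1} by observing that $N^{-\alpha/4}\Gamma^{1/4}\Theta^{1/4}\ln^{3/4}(dN^m)\to 0$ under the stated hypothesis and that the tails $e^{-\ln^{3/2}(dN^m)}$ are summable, then invoking Borel--Cantelli. Your write-up simply makes these routine steps (summability check, countable intersection over $\epsilon=1/j$) explicit, which matches the paper's argument.
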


We would like to remark that Theorem \ref{Thm-1}, Corollary \ref{Coro-1} and Corollary \ref{Coro-2} allow $k$ to grow with $N$. If we fix $k$, we have the following strong asymptotic freeness.

\begin{theorem} \label{Thm-2}
Let $k,d$ and $B_1,\ldots B_k$ to be fixed. Assume that 
\begin{align*}
    \lim_{N \to \infty} N^{-\alpha}m^3\ln^3 N = 0,
\end{align*}
then, for any non-commutative polynomial $P$, we have
\begin{align*}
    \lim_{N \to \infty} \left\| P \left( X_{J_1} \tilde\otimes_{J_1} I_N^{\otimes (m-|J_1|)}, \ldots, X_{J_k} \tilde\otimes_{J_k} I_N^{\otimes (m-|J_k|)} \right) \right\|
    = \left\| P \left( s_1,\ldots,s_k \right) \right\|
\end{align*}
almost surely, where $\{s_1, \ldots, s_k\}$ be a family of free semicircular elements.
\end{theorem}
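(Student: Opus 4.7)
The plan is to deduce Theorem \ref{Thm-2} from Corollary \ref{Coro-2} via the Haagerup--Thorbj{\o}rnsen linearization trick. It suffices to treat self-adjoint $P$ (via $\|P\|^2 = \|PP^*\|$) and to prove the two matching inequalities on $\liminf$ and $\limsup$ of $\|P(\cdots)\|$ separately.

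The lower bound $\liminf \ge \|P(s_1,\ldots,s_k)\|$ is the soft direction. I would first verify weak asymptotic freeness of the family $(X_{J_i}\tilde\otimes_{J_i} I_N^{\otimes(m-|J_i|)})_{i=1}^k$ by a direct Wick/moment calculation: each marginal is a GUE padded by identities, hence semicircular in the limit, while the constraint $|J_i|>m/2$ forces the tensor-overlap pattern producing vanishing mixed free cumulants (this is the role of the $\alpha>0$ assumption). Combined with $\|a\| = \lim_p \tau((a^*a)^p)^{1/(2p)}$ on the faithful trace of the free group factor, this yields the liminf bound almost surely.

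For the upper bound I would invoke Anderson's self-adjoint linearization: every self-adjoint non-commutative polynomial $Q$ in $k$ variables admits a pencil $L(x_1,\ldots,x_k) = B_0 \otimes \id + \sum_{i=1}^k B_i \otimes x_i$ with $B_i = B_i^* \in \bM_{d'}(\bC)$ for some fixed $d'$, such that $\sp(Q(x_1,\ldots,x_k))$ and $\|Q(x_1,\ldots,x_k)\|$ are recoverable from spectral data of $L$ via Schur complements. Evaluating $L$ at $x_i = X_{J_i}\tilde\otimes_{J_i} I_N^{\otimes(m-|J_i|)}$ produces a matrix of the form \eqref{eq-model} (up to the affine term $B_0\otimes\id$, absorbed by a spectral shift). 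Since $k,d,B_1,\ldots,B_k$ are fixed, so are $d'$ and the linearization coefficients, hence $\Gamma = \|\sum B_i^2\|$ and $\Theta = \sum \|\iota(B_i)\iota(B_i)^*\|$ are $O(1)$. Using $\ln(d' N^m) = m\ln N + O(1)$, the hypothesis $\lim_N N^{-\alpha}\Gamma\Theta \ln^3(d' N^m) = 0$ of Corollary \ref{Coro-2} reduces precisely to the stated $\lim_N N^{-\alpha} m^3 \ln^3 N = 0$. Corollary \ref{Coro-2} then gives $\sp(L(X_N)) \subseteq \sp(L(X_{\free})) + (-\epsilon,\epsilon)$ eventually almost surely, and the linearization lifts this to $\limsup \|P(X_N)\| \le \|P(s_1,\ldots,s_k)\|$.

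The main obstacle is not Corollary \ref{Coro-2} itself --- it is essentially tailor-made for this application --- but rather the clean verification of the weak asymptotic freeness of the $\tilde\otimes_{J_i}$-model, where the structural assumption $|J_i|>m/2$ enters in an essential way: the overlapping legs are what force the correct vanishing of mixed cumulants on the moment side. The linearization bookkeeping (matching the affine term, tracking $d'$ against the $d$ appearing in the hypotheses of Theorem \ref{Thm-1}) is by now standard in this area.
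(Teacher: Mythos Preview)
Your overall architecture is correct and matches the paper's: upper bound via linearization plus Corollary~\ref{Coro-2}, lower bound via weak asymptotic freeness plus $\|a\| = \lim_r \tau(|a|^r)^{1/r}$. Two points of comparison are worth noting.

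\textbf{Upper bound.} The paper does not unpack the linearization by hand. Instead it applies Corollary~\ref{Coro-2} once for \emph{arbitrary} fixed $d$ and $B_1,\ldots,B_k$, obtaining the spectral inclusion $\sp(X_N)\subseteq\sp(X_{\free})+(-\epsilon,\epsilon)$ eventually a.s., and then invokes Lemma~\ref{lem-BBvH 7.7} as a black box to pass from this family of spectral inclusions to the polynomial norm bound. Your route---choosing specific $B_i$ from Anderson's linearization of a given $P$---is the unpacked version of the same argument. One small wrinkle: the model \eqref{eq-model} underlying Theorem~\ref{Thm-1} and Corollary~\ref{Coro-2} has no affine term $B_0\otimes I$, so you would need the easy observation that the proof of Theorem~\ref{Thm-1} (via Lemma~\ref{Lem:vanHandel}, which does allow $A_0$) goes through unchanged when a deterministic shift is added, since $\sigma$ and $v$ depend only on the random part. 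The paper's packaging via Lemma~\ref{lem-BBvH 7.7} sidesteps this bookkeeping.

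\textbf{Lower bound.} Here the routes genuinely differ. The paper does \emph{not} perform a direct Wick/genus computation on the tensor model. Instead it observes that each $Y_i := X_{J_i}\tilde\otimes_{J_i} I_N^{\otimes(m-|J_i|)}$ satisfies $\bE[Y_i]=0$, $\bE[Y_i^2]=I_{N^m}$, and (reusing the computation \eqref{ineq-v} with $k=d=1$, $B_1=1$) that $v(Y_i)\le N^{-\alpha/2}=o(\ln^{-3/2}(N^m))$. This feeds directly into Lemma~\ref{lem-BBvH 2.10}, which delivers weak asymptotic freeness as a black box: independent centered Gaussian matrices with unit second moment and small $v$ parameter converge jointly in trace to free semicirculars. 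The entire lower-bound verification is thus a one-line parameter check. Your proposed direct Wick route would work in principle---the pairwise overlap $J_i\cap J_j\neq\emptyset$ forced by $|J_i|,|J_j|>m/2$ is indeed what suppresses the crossing pairings---but carrying out the combinatorics of mixed-color crossings across partially overlapping tensor legs is nontrivial, and you leave it as a sketch. The paper's route is both shorter and avoids the model-specific combinatorics you flag as the main obstacle.
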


The rest of the paper is organized as follows. We introduce some preliminaries on tensors, free probability, and matrices in Section \ref{sec:preliminary}. Then we present the proof of Theorem \ref{Thm-1} and Theorem \ref{Thm-2} in Section \ref{sec:proof 1} and Section \ref{sec:proof 2} respectively.

\subsection{Acknowledgements}
B. C. is supported by JSPS Grant-in-Aid Scientific Research (B) no. 21H00987, and Challenging Research (Exploratory) no. 20K20882 and 23K17299.

W. Y. gratefully acknowledges the financial support of ERC Consolidator Grant 815703 ”STAMFORD: Statistical Methods for High Dimensional Diffusions”. 
%%%add funding information there. 

\section{Preliminaries} \label{sec:preliminary}

\subsection{Tensor products} \label{sec:tensor}

For two vector spaces $\cA_1$ and $\cA_2$, we define $\cA_1 \otimes \cA_2$ as the tensor  product of $\cA_1$ and $\cA_2$. Let $m,N \in \bN$, we consider the tensor $\cH^{\otimes m}$ of $N$ dimensional vector space $\cH$. For any permutation $\sigma$ on $[m]$, we consider the linear transformation $\cA_\sigma: \cH^{\otimes m} \to \cH^{\otimes m}$ given by
\begin{align*}
    \cA_\sigma \left( x_1 \otimes \ldots \otimes x_m \right)
    = x_{\sigma(1)} \otimes \ldots \otimes x_{\sigma(m)}, \quad \forall x_1, \ldots, x_m \in \cH.
\end{align*}
Then $\cA_\sigma$ is a unitary transformation. For any linear transformation $\cX: \cH^{\otimes m} \to \cH^{\otimes m}$, we consider the linear transformation $\cA_{\sigma} \cX \cA_{\sigma^{-1}}$. If $\cX$ is of the form $\cX = \cX_1 \otimes \ldots \otimes \cX_m$, where $\cX_i$ are linear transformation on $\cH$, then for any $x_1, \ldots, x_m \in \cH$, we have
\begin{align*}
    &\cA_{\sigma} \cX \cA_{\sigma^{-1}} \left( x_1 \otimes \ldots \otimes x_m \right)
    = \cA_{\sigma} \cX \left( x_{\sigma^{-1}(1)} \otimes \ldots \otimes x_{\sigma^{-1}(m)} \right) \\
    &= \cA_{\sigma} \left( \cX_1\left(x_{\sigma^{-1}(1)}\right) \otimes \ldots \otimes \cX_m\left(x_{\sigma^{-1}(m)}\right) \right)
    = \cX_{\sigma(1)}(x_1) \otimes \ldots \otimes \cX_{\sigma(m)}(x_m),
\end{align*}
which implies that $\cA_{\sigma} \cX \cA_{\sigma^{-1}} = \cX_{\sigma(1)} \otimes \ldots \otimes \cX_{\sigma(m)}$. Therefore, we can define $\cL_\sigma$ the linear transformation on the space of linear transformation on $\cH^{\otimes m}$ given by $\cL_\sigma(\cX) = \cA_{\sigma} \cX \cA_{\sigma^{-1}}$. We would like to remark that, if a basis of $\cH$ is fixed, then any vector in $\cH$ corresponds to a vector in $\bC^N$, and a linear transformation on $\cH^{\otimes m}$ is equivalent to a matrix in $\bM_N(\bC)^{\otimes m} = \bM_{N^m}(\bC)$. Hence, we may abuse the notation and view $\cL_\sigma$ as a linear transformation on matrix space $\bM_N(\bC)^{\otimes m}$.

Let $J = \{a_1,\ldots,a_{|J|}\}$ be an (ordered) subset of $[m]$, and let $X \in \bM_{N^{|J|}}(\bC) = \bM_N(\bC)^{\otimes |J|}$ and $Y \in \bM_{N^{m-|J|}}(\bC) = \bM_N(\bC)^{\otimes (m-|J|)}$. Then $X \otimes Y$ is an element belonging to $\bM_N(\bC)^{\otimes m}$.  We choose the permutation $\sigma$ on $[m]$ such that $\sigma(l) = a_l$ for $1 \le l \le |J|$. Then we define
\begin{align} \label{eq-tilde otimes}
    X \tilde \otimes_J Y = \cL_\sigma (X \otimes Y).
\end{align}

\begin{remark}
If $J = \{1, \ldots, |J|\}$ and $\sigma = \id_{[m]}$, then the operator $\tilde \otimes_J$ becomes the usual operator $\otimes$.
\end{remark}

\begin{remark}
The linear transformation $\cL_\sigma$ depends on $\sigma$. However, in our paper, we will use this notation for $Y = \id_N^{\otimes (m-|J|)}$. Hence, $L_\sigma$ only depends on $J$, but not the value of $\sigma(|J|+1), \ldots, \sigma(m)$.
\end{remark}

We have the following property:
\begin{lemma} \label{Lem-1}
For $X,Y \in \bM_{N^{|J|}}(\bC), Z,W \in \bM_{N^{m-|J|}}(\bC)$, $a,b,c,d \in \bC$, we have the following statements.
\begin{enumerate}
    \item [(1)] $(aX+bY) \tilde \otimes_J (cZ+dW) = ac X \tilde \otimes_J Z + ad X \tilde \otimes_J W + bc Y \tilde \otimes_J Z + bd Y \tilde \otimes_J W$.
    \item [(2)] $(X \tilde \otimes_J Z) (Y \tilde \otimes_J W) = (XY) \tilde \otimes_J (ZW)$.
\end{enumerate}
\end{lemma}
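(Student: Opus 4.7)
The plan is to derive both statements directly from the definition $X \tilde \otimes_J Y = \cL_\sigma(X \otimes Y)$, where $\sigma$ is the permutation on $[m]$ with $\sigma(\ell) = a_\ell$ for $1 \le \ell \le |J|$, combined with the basic structural properties of $\cL_\sigma$ established just before the lemma. The only nontrivial thing to verify is that $\cL_\sigma$ is a unital algebra homomorphism on $\bM_N(\bC)^{\otimes m}$; everything else is a one-line consequence.

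For part (1), I would simply apply bilinearity of the usual tensor product $\otimes$ inside the argument of $\cL_\sigma$ and then use the fact that $\cL_\sigma$ is linear (which is immediate from its definition $\cL_\sigma(\cX) = \cA_\sigma \cX \cA_{\sigma^{-1}}$ and the linearity of left/right multiplication on $\bM_{N^m}(\bC)$). Explicitly,
\begin{align*}
(aX+bY) \tilde \otimes_J (cZ+dW)
&= \cL_\sigma\bigl((aX+bY) \otimes (cZ+dW)\bigr) \\
&= \cL_\sigma\bigl(ac\,X\otimes Z + ad\,X\otimes W + bc\,Y\otimes Z + bd\,Y\otimes W\bigr),
\end{align*}
and distributing $\cL_\sigma$ across the sum finishes the claim.

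For part (2), the key observation is that $\cL_\sigma$ is multiplicative: since $\cA_\sigma$ is unitary, $\cA_{\sigma^{-1}} = \cA_\sigma^{-1}$, hence for any $A, B \in \bM_{N^m}(\bC)$,
\begin{align*}
\cL_\sigma(A)\,\cL_\sigma(B)
= \cA_\sigma A \cA_{\sigma^{-1}} \cA_\sigma B \cA_{\sigma^{-1}}
= \cA_\sigma (AB) \cA_{\sigma^{-1}}
= \cL_\sigma(AB).
\end{align*}
Crucially, both $X \tilde \otimes_J Z$ and $Y \tilde \otimes_J W$ are defined with respect to the \emph{same} permutation $\sigma$ (it depends only on the set $J$, not on the operands), so the cancellation $\cA_{\sigma^{-1}}\cA_\sigma = \id$ really does take place. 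Combining this multiplicativity with the classical identity $(X\otimes Z)(Y\otimes W) = (XY)\otimes(ZW)$ gives
\begin{align*}
(X \tilde \otimes_J Z)(Y \tilde \otimes_J W)
= \cL_\sigma(X\otimes Z)\,\cL_\sigma(Y\otimes W)
= \cL_\sigma\bigl((XY)\otimes(ZW)\bigr)
= (XY)\tilde \otimes_J (ZW).
\end{align*}

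There is essentially no serious obstacle here; the entire lemma is a bookkeeping consequence of the definition. The only point that deserves a remark is the invariance of $\sigma$ on the positions $1,\dots,|J|$ across the two factors, which the authors already flag in the second remark following \eqref{eq-tilde otimes}.
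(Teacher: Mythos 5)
Your proof is correct and follows the same route as the paper, which simply states that the lemma is a direct verification from the definition; your write-up is just the spelled-out version of that verification (bilinearity of $\otimes$ plus linearity of $\cL_\sigma$ for (1), and multiplicativity of conjugation by the unitary $\cA_\sigma$ together with $(X\otimes Z)(Y\otimes W)=(XY)\otimes(ZW)$ for (2)).
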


\begin{proof}
The proof is a direct verification using the definition above, noting that the definition does not depend on the representation.
\end{proof}

Next, we turn to the relationship between $\tilde \otimes_J, \otimes$ and $\iota$. We have the following lemma.

\begin{lemma} \label{Lem-2}
Let $X \in \bM_{N^{|J|}}(\bC), Y \in \bM_{N^{m-|J|}}(\bC)$, then there exists a permutation matrix $U \in \bM_{N^m}(\bC)$ that only depends on $J$, such that
\begin{align} \label{eq-matrix-vector}
      \iota \left( X \tilde \otimes_J Y \right) = U \left( \iota(X) \otimes \iota(Y) \right).
\end{align}
\end{lemma}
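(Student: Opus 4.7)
The plan is to reduce the identity \eqref{eq-matrix-vector} to a verification on a basis of elementary tensors, and then to observe that for such basis inputs both sides of the claimed equation are standard basis vectors of $\bC^{N^{2m}}$ whose indices are related by a bijection depending only on $J$.

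First, I would note that each of the three operations involved---the product $\tilde\otimes_J$ (by Lemma~\ref{Lem-1}(1)), the ordinary tensor product, and the vectorization $\iota$---is bilinear in its matrix arguments. Consequently both sides of \eqref{eq-matrix-vector} depend bilinearly on the pair $(X,Y)$, and the desired identity is linear in the unknown $U$. It therefore suffices to prove it when $X$ and $Y$ range over elementary-tensor bases
\begin{align*}
    X = E_{i_1 j_1}\otimes\cdots\otimes E_{i_{|J|} j_{|J|}}, \qquad
    Y = E_{i'_1 j'_1}\otimes\cdots\otimes E_{i'_{m-|J|} j'_{m-|J|}}.
\end{align*}

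Second, I would compute both sides on these elementary inputs. The vector $\iota(X)$ is a standard basis vector of $\bC^{N^{2|J|}}$ indexed by the multi-index $(i_1,j_1,\ldots,i_{|J|},j_{|J|})$, and similarly for $\iota(Y)$; hence $\iota(X)\otimes\iota(Y)$ is a standard basis vector of $\bC^{N^{2m}}$ indexed by the concatenation of these multi-indices. On the other side, the definition \eqref{eq-tilde otimes} together with the explicit action of $\cL_\sigma$ on simple tensors recalled in Section~\ref{sec:tensor} gives
\begin{align*}
    X\tilde\otimes_J Y = E_{p_1 q_1}\otimes\cdots\otimes E_{p_m q_m},
\end{align*}
where $(p_{a_l},q_{a_l})=(i_l,j_l)$ for $1\le l\le|J|$ and the remaining slots are filled, in order, by the pairs $(i'_s,j'_s)$. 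Hence $\iota(X\tilde\otimes_J Y)$ is again a standard basis vector of $\bC^{N^{2m}}$.

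Third, the assignment sending the multi-index of $\iota(X)\otimes\iota(Y)$ to the multi-index of $\iota(X\tilde\otimes_J Y)$ is manifestly a bijection of the index set, determined solely by the interleaving rule $l\mapsto a_l$ and therefore only by $J$. Reading this bijection as a permutation of the standard basis of $\bC^{N^{2m}}$ defines the required permutation matrix $U$, and the relation \eqref{eq-matrix-vector} then extends by bilinearity to arbitrary $X$ and $Y$.

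The only real obstacle is bookkeeping: one must pin down compatible conventions for $\iota$ (typically column-stacking), for the Kronecker product of matrices, and for the identification $\bM_N(\bC)^{\otimes m}=\bM_{N^m}(\bC)$, and then check that the three multi-indices line up as claimed and that the resulting map of indices is bijective. There is no analytic content; the substance is purely combinatorial.
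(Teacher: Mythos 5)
Your proposal is correct and follows essentially the same route as the paper: the paper's proof simply observes that the entries of $X \tilde\otimes_J Y$ are those of $X \otimes Y$ (equivalently of $\iota(X)\otimes\iota(Y)$) relocated by a position permutation depending only on $J$, which is exactly the index bijection you construct. Your reduction to elementary tensors via bilinearity is just a more explicit write-up of that same bookkeeping, and if anything it makes precise why the permutation is independent of $X$ and $Y$ (once a convention for $\sigma$ on $[m]\setminus J$ is fixed), a point the paper's one-line argument glosses over.
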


\begin{proof}
(1). Using the fact that $A \otimes B$ and $B \otimes A$ have the same sets of entries for any $A$ and $B$, one can check with the definition that $X \tilde \otimes_J Y$ and $X \otimes Y$ have the same sets of entries, and the positions of the entries depends only on $J$. Note that the set of entries of $\iota(X \otimes Y)$ is the same as the set of components of the vector $\iota(X) \otimes \iota(Y)$. Thus, there exists a permutation matrix $U$ that only depends on $J$, such that \eqref{eq-matrix-vector} holds.
\end{proof}

\subsection{Free probability} \label{sec:free probability}

In this subsection, we recall some definitions and results in free probability. We refer interested readers to \cites{MingoSpeicher2017,NicaSpeicher} for more details on free probability.

\begin{definition}
Let $\cA$ be a unital associative algebra over $\bC$ and $\tau:\cA \to \bC$ be a linear functional satisfying $\tau(1)=1$. Then, the pair $(\cA,\tau)$ is called a \emph{non-commutative probability space}.
\end{definition}

\begin{definition}
Let $(\cA,\tau)$ be a non-commutative probability space.
\begin{enumerate}
    \item If $\tau(ab)=\tau(ba)$ for all $a,b \in \cA$, then $(\cA, \tau)$ a \emph{tracial non-commutative probability space}.
    \item If $\cA$ is a $*$-algebra and $\tau(a^*a)\ge0$ for all $a \in \cA$, then we call $\tau$ a \emph{state} and $(\cA,\tau)$ a \emph{$*$-probability space}. In addition, if $\tau(a^*a)=0$ implies that $a=0$, then we say $\tau$ is \emph{faithful}.
\end{enumerate}
\end{definition}

\begin{definition}
Let $(\cA,\tau)$ be a non-commutative probability space ($*$-probability space, resp.) and $I$ be an index set. Then the sub-algebras $\{\cA_i:i \in I\}$ of $\cA$ are \emph{free} in $(\cA,\tau)$ if
\begin{align*}
    \tau \left( \left( P_1(a_{i_1}) - \tau \left( P_1(a_{i_1}) \right) \right) \ldots \left( P_k(a_{i_k}) - \tau \left( P_k(a_{i_k}) \right) \right) \right) = 0,
\end{align*}
for all
\begin{itemize}
    \item $k \in \bN$,
    \item $i_1, \ldots, i_k \in I$ satisfying $i_1 \not= i_2 \not= i_3 \not= \ldots \not= i_{k-1} \not= i_k$, 
    \item polynomials (*-polynomials, resp.) $P_1, \ldots, P_k$,
    \item $a_j \in \cA_{i_j}$ for all $1 \le j \le k$.
\end{itemize}
A family of elements $\{a_i\}_{i \in I}$ in $\cA$ are called \emph{free} if the unital sub-algebras generated by $\{1,a_i\}$ ($\{1,a_i,a_i^*\}$, resp.) are free.
\end{definition}

\begin{definition}
Let $(\cA_n,\tau_n)$ be a sequence of non-commutative probability space (resp. a $*$-probability space). A sequence $a_{n,1},\ldots,a_{n,k} \in \cA_n$ is \emph{asymptotically free} if
\begin{align*}
    \tau_n \left( \left( P_1(a_{n,i_1}) - \tau \left( P_1(a_{n,i_1}) \right) \right) \ldots \left( P_m(a_{n,i_m}) - \tau \left( P_m(a_{n,i_m}) \right) \right) \right) \to 0,
\end{align*}
as $n \to \infty$, for any $m \in \bN$, $1 \le i_1, \ldots, i_m \le k$ satisfying $i_1 \not= i_2 \not= i_3 \not= \ldots \not= i_{m-1} \not= i_m$, and any polynomials (*-polynomials, resp.) $P_1, \ldots, P_k$.
\end{definition}

\begin{definition}
Let $(\cA, \tau)$ be a *-probability space. A self-adjoint element $s \in \cA$ is called a semicircular element if it has moments
\begin{align*}
    \tau(s^k) = \begin{cases}
        0, & k = 2m-1 \mathrm{\ for \ some \ } m \in \bN, \\
        \frac{1}{m+1} \binom{2m}{m}, &  k = 2m \mathrm{\ for \ some \ } m \in \bN.
    \end{cases}
\end{align*}
\end{definition}

\begin{remark}
The moments of a semicircular element $s$ are
\begin{align*}
    \tau \left( s^k \right)
    = \dfrac{1}{2\pi} \int_{-2}^2 t^k \sqrt{4-t^2} dt, \quad \forall k \in \bN.
\end{align*}
\end{remark}

Next, we introduce the weak convergence in the framework of free probability.

\begin{definition}
Let $(\cA_n,\tau_n)$ for every $n \in \bN$ as well as $(\cA,\tau)$ be non-commutative probability spaces. Let $\{b_n:n\in\bN\}$ be a sequence of elements with $b_{n,1}, \ldots, b_{n,k} \in \cA_n$ for every $n \in N$ and let $b_1, \ldots, b_k \in \cA$. We say that the family $\{b_{n,1}, \ldots, b_{n,k}:n\in\bN\}$ converges in distribution to $\{b_1,\ldots,b_k\}$ if it holds that
\begin{align*}
    \lim_{n \to \infty} \tau_n \left( b_{n,i_1}^{a_1} \ldots b_{n,i_m}^{a_m} \right)
    = \tau \left( b_{i_1}^{a_1} \ldots b_{i_m}^{a_m} \right), \quad \forall m \in \bN, \forall 1\le i_1,\ldots,i_m \le k, \forall a_1,\ldots, a_m \in \{1,*\}.
\end{align*}
\end{definition}

Now we turn to the so-called $C^*$-probability space.

\begin{definition}
Let $(\cA,\tau)$ be a $*$-probability space endowed with a norm $\|\cdot\|: \cA \to [0,+\infty)$ which makes it a complete normed vector space and which satisfies
\begin{align*}
    \|ab\| \le \|a\| \|b\| \quad \mathrm{and} \quad \|a^*a\| = \|a\|^2, \forall a,b\in \cA.
\end{align*}
Then $(\cA,\tau)$ is a \emph{$C^*$-probability space}.
\end{definition}

In the following, we collect some asymptotic freeness results on Gaussian matrices from \cite{BBvH2023}, which will be used later.

For each $N \in \bN$, let $\{H_i^N:1 \le i \le m\}$ be a family of independent self-adjoint $N$-dimensional random matrices satisfying $\bE [H_i^N] = 0$ and $\bE [(H_i^N)^2] = I_N$ for all $1 \le i \le m$, and have the following decomposition
\begin{align*}
    H_i^N = \sum_{j=1}^{N_i} g_{i,j}^N B_{ij}^N,
\end{align*}
for some $N_i \in \bN$, where $\{g_{i,j}^N:1 \le i \le m, 1 \le j \le N_i\}$ is a family of i.i.d. standard real Gaussian random variables, and $\{B_{ij}^N:1 \le i \le m, 1 \le j \le N_i\}$ is a family of $N$-dimensional self-adjoint complex matrices. Let $\{A_i:1 \le i \le m\}$ be a family of $d$-dimensional self-adjoint matrices. Define
\begin{align*}
    &\Theta_N = \sum_{i=1}^m A_i \otimes H_i^N = \sum_{i=1}^m A_i \otimes \sum_{j=1}^{N_i} g_{i,j}^N B_{ij}^N, \\
    & \Theta_{N,\free} = \sum_{i=1}^m A_i \otimes \sum_{j=1}^{N_i} B_{ij}^N \otimes s_{i,j}, \\
    &\Theta_{\free} = \sum_{i=1}^m A_i \otimes s_i,
\end{align*}
where $\{s_1, \ldots, s_k\}$ and $\{s_{i,j}:1 \le i \le k, 1 \le j \le N_i\}$ are families of free semicircular elements.

The following lemma is the linearization argument of \cite{Haagerup2005} and is a direct consequence of \cite{BBvH2023}*{Theorem 7.7}.

\begin{lemma} (\cite{BBvH2023}*{Theorem 7.7}) \label{lem-BBvH 7.7}
Assume that the following holds
\begin{align*}
    \sp \left( \Theta_N \right) \subseteq \sp \left( \Theta_\free \right) + [-\epsilon,\epsilon], \quad a.s.
\end{align*}
eventually as $N \to \infty$, for any $\epsilon>0$. Then for any non-commutative polynomial $P$, the following holds:
\begin{align*}
    \limsup_{N \to \infty} \left\| P \left( H_1^N, \ldots, H_m^N \right) \right\| \le \left\| P \left( s_1, \ldots, s_m \right) \right\|, \quad a.s.
\end{align*}
\end{lemma}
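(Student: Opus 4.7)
The plan is to apply Haagerup's linearization trick, which reduces the operator-norm estimate for an arbitrary noncommutative polynomial to a spectral containment statement for an affine linear pencil in the variables $H_i^N$, which is precisely the form in which the hypothesis on $\Theta_N$ is cast.

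The first step is to reduce to self-adjoint polynomials: since $\|P(H^N)\|^2 = \|(P^*P)(H^N)\|$ and $\|P(s)\|^2 = \|(P^*P)(s)\|$, and $P^*P$ is self-adjoint, it suffices to prove the $\limsup$ bound for self-adjoint $P$.

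Second, I would invoke the Haagerup--Thorbj\o rnsen linearization procedure: for any self-adjoint noncommutative polynomial $P$ in $m$ self-adjoint indeterminates there exist an integer $d'$, self-adjoint matrices $A_0', A_1', \ldots, A_m' \in \bM_{d'}(\bC)$, and a rank-one self-adjoint projection $e$, such that for every $\lambda \in \bR$ and every $m$-tuple $x = (x_1,\ldots,x_m)$ of self-adjoint elements in a unital $C^*$-algebra,
$$\lambda \notin \sp\bigl(P(x)\bigr) \iff L(\lambda, x) := A_0' - \lambda e + \sum_{i=1}^m A_i' \otimes x_i \text{ is invertible}.$$
This reduces the analysis of $\sp(P(x))$ to the invertibility of an affine pencil in $x$ that has essentially the form of $\Theta_N$; the additive deterministic term $A_0' - \lambda e$ is absorbed by adjoining the identity as an additional ``variable'' (taking $H_0^N = I_N$ and $s_0 = 1$), at which point the matrix coefficients $A_i'$ from the linearization play the role of the $A_i$ appearing in the hypothesis.

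Third, I would apply the hypothesis of the lemma to this augmented family of coefficient matrices to obtain, almost surely and for every $\epsilon > 0$ eventually in $N$,
$$\sp\bigl(L(\lambda, H^N)\bigr) \subseteq \sp\bigl(L(\lambda, s)\bigr) + [-\epsilon, \epsilon].$$
Now fix $\lambda > \|P(s)\|$; then $\lambda \notin \sp(P(s))$, so $L(\lambda, s)$ is invertible and $\mathrm{dist}(0, \sp(L(\lambda, s))) \ge \delta > 0$. For $\epsilon < \delta$ the displayed containment forces $0 \notin \sp(L(\lambda, H^N))$, hence $\lambda \notin \sp(P(H^N))$. A symmetric argument handles $\lambda < -\|P(s)\|$, and together they give $\limsup_{N\to\infty} \|P(H^N)\| \le \|P(s)\|$ almost surely.

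The main obstacle, in my view, is the uniformity in $\lambda$: the hypothesis furnishes an almost-sure event per fixed $\epsilon$ and per choice of coefficient matrices, but the reduction from ``$\lambda \notin \sp(P(H^N))$'' to the operator-norm bound requires the spectral exclusion to hold simultaneously for all $\lambda$ in an interval beyond $\|P(s)\|$. I would address this by restricting $\lambda$ to a countable dense subset of $\bigl(\|P(s)\|, M\bigr]$, where $M$ is a deterministic a.s. upper bound for $\|P(H^N)\|$ coming from standard Gaussian concentration on $\|H_i^N\|$ and $\|P(H^N)\| \le C_P(1+\max_i\|H_i^N\|^{\deg P})$, and then using Lipschitz continuity of $\lambda \mapsto L(\lambda, x)$ (hence continuity of its spectrum in Hausdorff distance) to propagate the conclusion to all $\lambda$ in the interval from a single almost-sure event.
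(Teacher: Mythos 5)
The paper does not actually prove this lemma: it is quoted as \cite{BBvH2023}*{Theorem 7.7}, i.e.\ the Haagerup--Thorbj{\o}rnsen linearization argument, and your proposal reconstructs exactly that argument (reduction to self-adjoint $P$, self-adjoint linearization, exclusion of $0$ from the spectrum of the pencil, then uniformity in $\lambda$). So the route is the intended one, but three steps need repair, the first being the substantive one.

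First, absorbing the constant term $A_0'-\lambda e$ by ``adjoining $H_0^N=I_N$ as an additional variable'' is not licensed by the hypothesis as stated: the hypothesis concerns pencils $\sum_{i=1}^m A_i\otimes H_i^N$ in the given centered matrices, and $I_N$ is not one of them, so no choice of coefficients produces a pencil with a deterministic constant term. What the linearization genuinely requires---and what the cited BBvH Theorem 7.7 actually assumes---is the spectral inclusion for $A_0\otimes I_N+\sum_{i=1}^m A_i\otimes H_i^N$ versus $A_0\otimes 1+\sum_{i=1}^m A_i\otimes s_i$, for every $d$ and every self-adjoint $A_0,A_1,\dots,A_m\in\bM_d(\bC)$; you should state and invoke the hypothesis in that form (in this paper it costs nothing, since Lemma \ref{Lem:vanHandel} already allows the constant term $A_0$, so Theorem \ref{Thm-1} holds verbatim with a term $B_0\otimes I_{N^m}$ added). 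Second, the a priori bound $M$ on $\|P(H_1^N,\dots,H_m^N)\|$ cannot come from ``standard Gaussian concentration'' in this generality: with $H^N=\diag(g_1,\dots,g_N)$ one has $\bE[(H^N)^2]=I_N$ yet $\|H^N\|\asymp\sqrt{2\ln N}$. Instead, extract the bound from the hypothesis itself with $d=1$ and coefficients selecting a single matrix, which gives $\|H_i^N\|\le 2+\epsilon$ eventually almost surely, hence a fixed $M$. Third, density of the $\lambda$'s plus continuity alone does not transfer the exclusion $0\notin\sp\bigl(L(\lambda,H^N)\bigr)$ from a countable dense set to all $\lambda$: spectra can approach $0$ between sample points. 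You need the quantitative gap: $\delta(\lambda)=\mathrm{dist}\bigl(0,\sp(L(\lambda,s))\bigr)$ is continuous and strictly positive on the compact window $\{\lambda:\ \|P(s)\|+\epsilon\le|\lambda|\le M\}$, hence bounded below by some $\delta_0>0$; apply the hypothesis at level $\epsilon'<\delta_0/2$ at the finitely many points of a $\delta_0/4$-net (a finite intersection of a.s.-eventually events) and use that $\lambda\mapsto L(\lambda,x)$ is $1$-Lipschitz to cover all $\lambda$ in the window simultaneously. With these corrections your sketch becomes the standard proof that the paper delegates to the citation.
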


The following lemma is \cite{BBvH2023}*{Lemma 7.9}.
\begin{lemma} (\cite{BBvH2023}*{Lemma 7.9}) \label{lem-BBvH 7.9}
For every $N \in \bN$, we have
\begin{align*}
    (\tr \otimes \tau) \left( H_{j_1}^{N,\free} \ldots H_{j_q}^{N,\free} \right)
    = \tau \left(  s_{j_1} \ldots s_{j_q} \right), \quad \forall q \in \bN, \ \forall 1 \le j_1, \ldots, j_q \le m,
\end{align*}
where $H_i^{N,\free} = \sum_{j=1}^{N_i} B_{ij}^N \otimes s_{i,j}$. In particular, it holds that
\begin{align*}
    \sp \left( \Theta_{N,\free} \right) = \sp \left( \Theta_\free \right).
\end{align*}
\end{lemma}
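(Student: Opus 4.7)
The plan is to prove the moment identity first and derive the spectral identity as a routine consequence. The underlying fact is that although each $H_i^{N,\free}=\sum_{j=1}^{N_i}B_{ij}^N\otimes s_{i,j}$ is a priori only an $\bM_N(\bC)$-valued semicircular element, under the scalar trace $\tr\otimes\tau$ the family $(H_1^{N,\free},\ldots,H_m^{N,\free})$ behaves exactly as a free semicircular family $(s_1,\ldots,s_m)$.

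For the moment identity, expand
\begin{align*}
(\tr\otimes\tau)\bigl(H_{j_1}^{N,\free}\cdots H_{j_q}^{N,\free}\bigr)
=\sum_{k_1,\ldots,k_q}\tr\bigl(B_{j_1k_1}^N\cdots B_{j_qk_q}^N\bigr)\,\tau\bigl(s_{j_1,k_1}\cdots s_{j_q,k_q}\bigr),
\end{align*}
and apply the free moment--cumulant formula. Because $\kappa_2(s_{i,j},s_{i',j'})=\delta_{(i,j),(i',j')}$, the factor $\tau(\cdots)$ counts the non-crossing pair partitions $\pi\in NC_2(q)$ satisfying $(j_a,k_a)=(j_b,k_b)$ for every pair $\{a,b\}\in\pi$. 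Swapping sums yields
\begin{align*}
(\tr\otimes\tau)\bigl(H_{j_1}^{N,\free}\cdots H_{j_q}^{N,\free}\bigr)
=\sum_{\pi\in NC_2(q),\,\pi\,\text{compatible with}\,\mathbf j}\ \sum_{k:\,k_a=k_b\,\forall\,\{a,b\}\in\pi}\tr\bigl(B_{j_1k_1}^N\cdots B_{j_qk_q}^N\bigr),
\end{align*}
where \emph{compatible} means $j_a=j_b$ for every pair of $\pi$.

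The technical core is the evaluation of the inner sum, which I would prove by induction on $q$ via a standard peeling of innermost pairs. A non-crossing pair partition $\pi$ on $[q]$ always contains a pair of adjacent indices $\{a,a+1\}$; on such a pair the constraints $k_a=k_{a+1}$ and $j_a=j_{a+1}$ turn the two adjacent factors into $(B_{j_ak}^N)^2$, and the sum $\sum_k(B_{j_ak}^N)^2=I_N$ is precisely the covariance normalization $\bE[(H_{j_a}^N)^2]=I_N$. These two factors collapse to the identity and drop out of the trace, reducing to a non-crossing pairing on $q-2$ points; iterating $q/2$ times leaves $\tr(I_N)=1$. Hence
\begin{align*}
(\tr\otimes\tau)\bigl(H_{j_1}^{N,\free}\cdots H_{j_q}^{N,\free}\bigr)
=\#\bigl\{\pi\in NC_2(q):j_a=j_b\ \forall\,\{a,b\}\in\pi\bigr\},
\end{align*}
which equals $\tau(s_{j_1}\cdots s_{j_q})$ by the same cumulant formula applied to the free semicircular family $(s_1,\ldots,s_m)$ with $\kappa_2(s_i,s_{i'})=\delta_{ii'}$.

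For the spectral identity, $\Theta_{N,\free}$ and $\Theta_\free$ are self-adjoint elements of tracial $C^*$-probability spaces with faithful traces $\tr_d\otimes\tr\otimes\tau$ and $\tr_d\otimes\tau$. The moment identity, combined with the matching coefficient matrices $A_i\in\bM_d(\bC)$, shows that all $*$-moments of $\Theta_{N,\free}$ and $\Theta_\free$ agree. Since the spectrum of a self-adjoint element of a faithful tracial $C^*$-probability space equals the support of its moment-determined spectral distribution, we conclude $\sp(\Theta_{N,\free})=\sp(\Theta_\free)$. The main obstacle is the peeling step, which uses both the non-crossing structure (ensuring an innermost pair of adjacent indices) and the covariance normalization $\sum_j(B_{ij}^N)^2=I_N$ in an essential way.
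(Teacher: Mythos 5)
Your proof is correct. Note, however, that the paper does not prove this statement at all: it is imported verbatim as Lemma 7.9 of the cited work of Bandeira, Boedihardjo and van Handel, so there is no internal argument to compare against. Your blind derivation is a sound self-contained reconstruction along the standard lines: the free Wick formula reduces the mixed moment to a sum over non-crossing pair partitions compatible with the index word, the peeling of an innermost adjacent pair uses exactly the covariance normalization $\sum_{j}(B_{ij}^N)^2=\bE[(H_i^N)^2]=I_N$ (valid here because the $g_{i,j}$ are i.i.d.\ standard Gaussians), and the factorization of the sum over the peeled index is legitimate since that index occurs only in the two adjacent positions, so each compatible $\pi$ contributes $\tr(I_N)=1$ under the normalized trace. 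The passage from equality of trace moments of $\Theta_{N,\free}$ and $\Theta_{\free}$ to equality of spectra via faithfulness of the tracial states (support of the spectral distribution equals the spectrum for a self-adjoint element under a faithful trace) is also correct; the only points worth making explicit in a polished write-up are that $\tr$ is the normalized trace and that the vacuum trace on the algebra generated by a free semicircular family is indeed faithful, both of which are standard.
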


The next lemma is a combination of part a of \cite{BBvH2023}*{Theorem 2.10} and \cite{BBvH2023}*{Lemma 7.11}, which yields the weak asymptotic freeness.
\begin{lemma} (\cite{BBvH2023}*{Theorem 2.10}, \cite{BBvH2023}*{Lemma 7.11}) \label{lem-BBvH 2.10}
Let the control parameter $v$ be defined in \eqref{def-v}. If for all $1 \le i \le m$, $v(H_i^N) = o_N(\ln^{-3/2} N)$, then for any non-commutative polynomial $P$, it holds almost surely that
\begin{align*}
    \lim_{N \to \infty} \tr \left( P \left( H_1^N, \ldots, H_m^N \right) \right)
    = \tau \left( P \left( s_1,\ldots,s_m \right) \right).
\end{align*}
\end{lemma}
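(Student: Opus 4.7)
The plan is to reduce the almost sure convergence of $\tr P(H_1^N,\ldots,H_m^N)$ to the free moment $\tau(P(s_1,\ldots,s_m))$ to two separate ingredients, and then close the argument via a Borel--Cantelli step. The two ingredients are (i) convergence in expectation of $\tr P$ and (ii) concentration of $\tr P$ around its mean; the assumption $v(H_i^N) = o_N(\ln^{-3/2} N)$ is tailored precisely so that both error bounds are summable in $N$ after raising to a small power.

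For (i), I would expand $\bE[\tr(H_{i_1}^N \cdots H_{i_q}^N)]$ by Gaussian integration by parts (the Wick formula) applied to the scalar Gaussians $g_{i,j}^N$ in the decomposition $H_i^N = \sum_j g_{i,j}^N B_{ij}^N$. This writes the expectation as a sum over pairings of the $q$ factors; the leading contribution from non-crossing pairings reproduces the free semicircular moment $\tau(s_{i_1}\cdots s_{i_q})$, while the residual sum over crossing pairings is exactly what the control parameter $v$ is designed to quantify. After linearization to a general non-commutative polynomial, reorganizing the expansion yields a bound of the qualitative shape
\begin{equation*}
\left| \bE \left[ \tr P(H_1^N,\ldots,H_m^N) \right] - \tau \left( P(s_1,\ldots,s_m) \right) \right| \le C_P \max_{1 \le i \le m} v(H_i^N)^2,
\end{equation*}
which is essentially the content of \cite{BBvH2023}*{Theorem 2.10(a)}.

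For (ii), the map $(g_{i,j}^N)_{i,j} \mapsto \tr P(H_1^N,\ldots,H_m^N)$ is a polynomial in independent standard Gaussians whose local Lipschitz constant is controlled by the norms $\|B_{ij}^N\|$ and $\|H_i^N\|^{\deg P - 1}$, both of which are tamed (with high probability) by $v$ up to polylogarithmic factors in $N$. Gaussian concentration (the Gaussian Poincar\'e inequality, or a Herbst-type argument for polynomial functions) then gives sub-Gaussian fluctuations of $\tr P$ around its mean at a scale compatible with the hypothesis; this is the content of \cite{BBvH2023}*{Lemma 7.11}. Combining (i) and (ii) with Borel--Cantelli applied to a geometric sequence of thresholds promotes convergence in probability to almost sure convergence, yielding the claim.

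The main obstacle is (i): the total number of pairings grows like $(q-1)!!$, so the combinatorial explosion must be absorbed by a careful analysis exploiting the matrix structure of the $B_{ij}^N$ together with cyclicity of the trace, rather than bounded term-by-term. This combinatorial--analytic estimate, not a routine Wick computation, is exactly what the interpolation technology of \cite{BBvH2023} makes tractable and is the real content of the lemma.
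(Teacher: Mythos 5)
The paper does not prove this lemma at all: it is imported as a direct combination of \cite{BBvH2023}*{Theorem 2.10} (part a) and \cite{BBvH2023}*{Lemma 7.11}, so there is no internal argument for your sketch to be measured against. Judged on its own terms, your two-ingredient architecture (convergence of $\bE\,\tr P$ to the free moment, plus concentration of $\tr P$ around its mean, then Borel--Cantelli) is the right shape and is indeed how the cited results combine. But as a standalone proof there is a genuine gap: the key estimate of step (i), the bound $\bigl|\bE\,\tr P(H_1^N,\ldots,H_m^N)-\tau(P(s_1,\ldots,s_m))\bigr|\le C_P\max_i v(H_i^N)^2$, is asserted rather than proved, and your closing paragraph concedes that this estimate is ``the real content of the lemma'' and is supplied by the technology of \cite{BBvH2023}. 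An argument whose decisive step is the theorem being cited is circular as an independent proof; in effect it reduces to the same citation the paper makes, only with extra (and partly inaccurate) narrative around it.

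Two places where that narrative is off. First, the obstacle you single out --- the $(q-1)!!$ growth of Wick pairings --- is not an obstacle for this statement: $P$ is fixed, so the number of pairings is a constant independent of $N$, and term-by-term bounds on the finitely many crossing pairings would suffice for convergence in expectation; the combinatorial explosion only matters when the degree grows with $N$, i.e.\ for operator-norm (strong) convergence, which is not what this lemma claims. Relatedly, \cite{BBvH2023} do not tame pairing combinatorics at all; their quantitative comparison of $\bE\,\tr$ of test functions of $X_N$ with the free model is obtained by an interpolation argument, with the defect controlled by $v(X_N)^2$ times derivative norms of the test function. Second, your claim that the non-crossing pairings ``reproduce the free semicircular moment'' silently uses the normalization $\bE[(H_i^N)^2]=I_N$ together with independence of the $H_i^N$: a priori the non-crossing terms reproduce the moments of $H_i^{N,\free}=\sum_j B_{ij}^N\otimes s_{i,j}$, and these coincide with $\tau(s_{i_1}\cdots s_{i_q})$ exactly because every innermost pair is adjacent and contributes $\sum_j (B_{ij}^N)^2=I_N$; this identification is the content of Lemma \ref{lem-BBvH 7.9} and should be invoked explicitly. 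Your step (ii) is fine in spirit (a polynomial in Gaussians, gradient controlled through $v$ and a priori norm bounds, then Gaussian concentration), and matches what \cite{BBvH2023}*{Lemma 7.11} provides; but to make the whole argument honest you must either prove the step (i) estimate or, like the paper, simply cite it.
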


\subsection{Estimates on spectrum and operator norm}

For $N,k \in \bN$, let $A_0,\ldots,A_k \in \bM_N(\bC)$ be self-adjoint matrices, let $\{g_1,\ldots,g_k\}$ be a family of i.i.d. real standard Gaussian variables, and let $\{s_1,\ldots,s_k\}$ be a family of free semicircular elements. We define
\begin{align*}
    X_N = A_0 + \sum_{i=1}^k g_i A_i, \quad\quad
    X_{\free} = A_0 \otimes {\bf 1} + \sum_{i=1}^k A_i \otimes s_i.
\end{align*}
We introduce the following control parameters:
\begin{align} \label{def-sigma}
    \sigma(X_N)^2 = \left\| \sum_{i=1}^k \left( A_i \right)^2 \right\|
\end{align}
and
\begin{align} \label{def-v}
    v(X_N)^2 = \left\| \Cov(\iota(X_N)) \right\|
    = \left\| \sum_{i=1}^k \iota(A_i) \iota(A_i)^* \right\|.
\end{align}
We define
\begin{align} \label{def-u}
    u(X_N)^2 = \sigma(X_N) v(X_N).
\end{align}

The following lemma is a direct consequence of \cite{BBvH2023}*{Theorem 2.1, Corollary 2.2}.

\begin{lemma} (\cite{BBvH2023}*{Theorem 2.1, Corollary 2.2}) \label{Lem:vanHandel}
For all $t \ge 0$, we have
\begin{align*}
    \bP \left( \sp(X_N) \subseteq \sp(X_{\free}) + C u(X_N) \left( \ln^{3/4} N + t \right) [-1,1] \right) \ge 1 - e^{-t^2},
\end{align*}
and
\begin{align*}
    \bP \left( \left\| X_N \right\| > \left\| X_\free \right\| + C u(X_N) \left( \ln^{3/4} N + t \right) \right) \le e^{-t^2},
\end{align*}
for a universal positive constant $C$.
\end{lemma}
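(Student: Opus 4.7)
The plan is essentially a citation with notation-matching, since Lemma~\ref{Lem:vanHandel} is stated explicitly as a restatement of two results from \cite{BBvH2023}. The model $X_N = A_0 + \sum_{i=1}^k g_i A_i$ with self-adjoint coefficients $A_0,\ldots,A_k$ and i.i.d.\ real standard Gaussians $g_i$ is precisely the self-adjoint Gaussian matrix series treated in \cite{BBvH2023}, and $X_\free = A_0 \otimes {\bf 1} + \sum_{i=1}^k A_i \otimes s_i$ is the canonical free companion of $X_N$. So the whole task is to align parameter definitions and then quote.

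The first step is to verify that the control parameters match. The quantity $\sigma(X_N)^2 = \|\sum_i A_i^2\|$ from \eqref{def-sigma} is the operator-norm variance that appears throughout the BBvH framework; $v(X_N)^2 = \|\sum_i \iota(A_i)\iota(A_i)^*\|$ from \eqref{def-v} is their vectorized covariance spectral norm; and $u(X_N)^2 = \sigma(X_N) v(X_N)$ from \eqref{def-u} is exactly the hybrid geometric-mean parameter governing their sharpest intrinsic-freeness tail bounds. Both inequalities in the lemma are written in the precise functional form used in \cite{BBvH2023}, so no additional algebra is needed.

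The second step is to invoke the two cited results. The spectral inclusion inequality follows from \cite{BBvH2023}*{Theorem 2.1} applied to the pair $(X_N, X_\free)$, whose intrinsic-freeness estimate yields the exponential tail $1 - e^{-t^2}$ with neighborhood radius $C u(X_N)(\ln^{3/4} N + t)$. The operator norm inequality is \cite{BBvH2023}*{Corollary 2.2}, which is itself a deterministic consequence of the spectral inclusion: on the event $\{\sp(X_N) \subseteq \sp(X_\free) + \epsilon[-1,1]\}$ one has $\|X_N\| = \max_{\lambda \in \sp(X_N)} |\lambda| \le \|X_\free\| + \epsilon$, so both tail bounds live on the same probability set.

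There is no genuine mathematical obstacle — the entire content is in \cite{BBvH2023}. The only clerical point to monitor is that their Theorem 2.1 is often phrased in terms of a refined covariance parameter $\sigma_*(X_N)$ with $\sigma_*(X_N) \le \sigma(X_N)$, and in terms of $\ln d$ where here $d = N$; since $u(X_N)$ already dominates the refined parameter up to absolute constants and the logarithmic exponent matches, any such discrepancy is absorbed into the universal constant $C$ and the stated form emerges unchanged.
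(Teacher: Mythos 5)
Your proposal is correct and matches the paper exactly: the paper gives no independent proof of Lemma \ref{Lem:vanHandel}, treating it precisely as you do, namely as a direct quotation of \cite{BBvH2023}*{Theorem 2.1, Corollary 2.2} after identifying $\sigma(X_N)$, $v(X_N)$, $u(X_N)$ with the BBvH parameters (with $d=N$), the refined parameter $\sigma_*$ being dominated by $u(X_N)$ and absorbed into the universal constant $C$. Nothing further is needed.
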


\subsection{Matrix inequalities}

The following lemma is the matrix version of Cauchy-Schwarz inequality.
\begin{lemma} \label{lem-matrix CS}
For any $n \in \bN$, any family $\{E_i,F_i:1\le i \le n\}$ of $N \times N$ matrices, we have
\begin{align*}
    \left\| \sum_{i=1}^n E_iF_i \right\|^2
    \le \left\| \sum_{i=1}^n E_iE_i^* \right\| \left\| \sum_{i=1}^n F_i^*F_i \right\|.
\end{align*}
\end{lemma}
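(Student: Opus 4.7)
The plan is to reduce the inequality to the standard submultiplicativity $\|AB\|\le \|A\|\|B\|$ of the operator norm by packaging the $E_i$ into one block row and the $F_i$ into one block column. Concretely, I would form the $N\times nN$ matrix $E$ whose $i$-th $N\times N$ block column is $E_i$, and the $nN\times N$ matrix $F$ whose $i$-th $N\times N$ block row is $F_i$. A direct block-matrix computation then gives the three identities $EF = \sum_{i=1}^n E_iF_i$, $EE^* = \sum_{i=1}^n E_iE_i^*$, and $F^*F = \sum_{i=1}^n F_i^*F_i$.

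With this dictionary in hand, the lemma becomes a one-line consequence of submultiplicativity together with the $C^*$-identity $\|A\|^2 = \|AA^*\| = \|A^*A\|$: indeed $\|\sum_i E_iF_i\|^2 = \|EF\|^2 \le \|E\|^2\|F\|^2 = \|EE^*\|\,\|F^*F\|$, which is exactly the claimed bound.

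There is essentially no obstacle; the only thing to double-check is the elementary block-matrix arithmetic, and the only structural input is the $C^*$-identity for $N\times N$ complex matrices. I expect this lemma to serve later as a workhorse for bounding norms of sums that arise when manipulating the control parameters $\sigma$ and $v$ in the application of Lemma \ref{Lem:vanHandel}, in particular when one needs to split a product of tensor factors across the $E$/$F$ factorisation to match the form of $\sigma(X_N)$ and $v(X_N)$ defined in \eqref{def-sigma}--\eqref{def-v}.
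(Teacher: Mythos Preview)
Your argument is correct, but it is not the route the paper takes. The paper argues pointwise: for unit vectors $x,y\in\bC^N$ it writes $x^*\bigl(\sum_i E_iF_i\bigr)y=\sum_i (E_i^*x)^*(F_iy)$ and applies the ordinary Cauchy--Schwarz inequality in $\bC^{nN}$ to bound this by $\bigl(\sum_i x^*E_iE_i^*x\bigr)^{1/2}\bigl(\sum_i y^*F_i^*F_iy\bigr)^{1/2}$, then takes the supremum over $x,y$. Your block-matrix packaging $E=[E_1\ \cdots\ E_n]$, $F=[F_1;\ldots;F_n]$ followed by $\|EF\|^2\le\|E\|^2\|F\|^2=\|EE^*\|\,\|F^*F\|$ is more algebraic: it trades the explicit Cauchy--Schwarz step for submultiplicativity plus the $C^*$-identity. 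The advantage of your version is that it works verbatim in any $C^*$-algebra (just interpret $E,F$ as elements of $M_{1,n}(\cA)$ and $M_{n,1}(\cA)$), and it makes transparent why the two sums $\sum E_iE_i^*$ and $\sum F_i^*F_i$ are the ``right'' quantities. The paper's version is slightly more elementary in that it never leaves $\bC^N$ or invokes the $C^*$-identity. As a side remark, your expectation about where the lemma is used is off: in the actual proof of Theorem~\ref{Thm-1} the bounds on $\sigma(X_N)^2$ and $v(X_N)^2$ are obtained by direct computation and the triangle inequality, and Lemma~\ref{lem-matrix CS} is in fact not invoked anywhere in the paper.
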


\begin{proof}
By Cauchy-Schwarz inequality, for any $x \in \bC^N$ with $\|x\|=1$, any family $\{E_i,F_i:1\le i \le n\}$, we obtain
\begin{align*}
    x^* \left( \sum_{i=1}^n E_iF_i \right) y
    = \sum_{i=1}^n x^* E_iF_i y
    \le \left( \sum_{i=1}^n x^* E_iE_i^* x \right)^{1/2} \left( \sum_{i=1}^n y^* F_i^*F_i y \right)^{1/2}.
\end{align*}
Hence,
\begin{align*}
    &\left\| \sum_{i=1}^nE_iF_i \right\|
    = \sup_{x,y \in \bC^N, \|x\|=\|y\|=1} x^* \left( \sum_{i=1}^n E_iF_i \right) y \\
    &\le \sup_{x,y \in \bC^N, \|x\|=\|y\|=1} \left( \sum_{i=1}^n x^* E_iE_i^* x \right)^{1/2} \left( \sum_{i=1}^n y^* F_i^*F_i y \right)^{1/2}
    = \left\| \sum_{i=1}^n E_iE_i^* \right\|^{1/2} \left\| \sum_{i=1}^n F_i^*F_i \right\|^{1/2}.
\end{align*}
\end{proof}

\section{Proof of Theorem \ref{Thm-1}} \label{sec:proof 1}

We divide the proof into three steps.

{\bf Step 1.} In this step, we introduce some notation.
Let $\{e_i^{(N)}:1 \le i \le N\}$ be the canonical basis of $\bC^N$. For $1 \le i \le N$, let
\begin{align*}
    E_{ii}^{(N)} = e_i^{(N)} \left( e_i^{(N)} \right)^*,
\end{align*}
and for $1 \le i < j \le N$ let
\begin{align*}
    & E_{ij}^{(N)} = \dfrac{1}{\sqrt{2}} \left( e_i^{(N)} \left( e_j^{(N)} \right)^* + e_j^{(N)} \left( e_i^{(N)} \right)^* \right), \\
    & \tilde E_{ij}^{(N)} = \dfrac{\sqrt{-1}}{\sqrt{2}} \left( e_i^{(N)} \left( e_j^{(N)} \right)^* - e_j^{(N)} \left( e_i^{(N)} \right)^* \right).
\end{align*}
Recall that for each $1 \le i \le k$, $X_{J_i}$ is a GUE matrix of dimension $N^{|J_i|}$. We have the decomposition
\begin{align*}
    X_{J_i} = N^{-|J_i|/2} \sum_{j=1}^{N^{2|J_i|}} g_{i,j} A_{i,j},
\end{align*}
where $\{A_{i,j}: 1 \le j \le N^{2|J_i|}\}$ is the set
\begin{align*}
    \{E_{ll}^{(N^{|J_i|})},E_{rs}^{(N^{|J_i|})},\tilde E_{rs}^{(N^{|J_i|})}:1 \le l \le N^{|J_i|}, 1 \le r<s \le N^{|J_i|}\},
\end{align*}
and $\{g_{i,j}: 1 \le i \le k, 1 \le j \le N^{2|J_i|}\}$ is a family of i.i.d. real standard Gaussian variables.
Then the matrix $X_{J_i} \tilde \otimes_{J_i} I_N^{\otimes (m-|J_i|)}$ has the following representation
\begin{align*}
    X_{J_i} \tilde \otimes_{J_i} I_N^{\otimes (m-|J_i|)}
    = N^{-|J_i|/2} \sum_{j=1}^{N^{2|J_i|}} g_{i,j} A_{i,j} \tilde \otimes_{J_i} I_N^{\otimes (m-|J_i|)}.
\end{align*}
Thus, by \eqref{eq-model}, we can write
\begin{align*}
    X_N &= \sum_{i=1}^k B_i \otimes \left( \sum_{j=1}^{N^{2|J_i|}} N^{-|J_i|/2} g_{i,j} A_{i,j} \tilde \otimes_{J_i} I_N^{\otimes (m-|J_i|)} \right) \\
    &= \sum_{i=1}^k N^{-|J_i|/2} \sum_{j=1}^{N^{2|J_i|}} g_{i,j} B_i \otimes \left( A_{i,j} \tilde \otimes_{J_i} I_N^{\otimes (m-|J_i|)} \right).
\end{align*}

{\bf Step 2.} Let $X_{N,\free}$ be the object obtained from $X_N$ by replacing the i.i.d. standard real Gaussian random variables by free semicircular elements. That is,
\begin{align*}
    X_{N,\free}
    = \sum_{i=1}^k N^{-|J_i|/2} \sum_{j=1}^{N^{2|J_i|}} B_i \otimes \left( A_{i,j} \tilde \otimes_{J_i} I_N^{\otimes (m-|J_i|)} \right) \otimes s_{i,j},
\end{align*}
where $\{s_{i,j}:1 \le i \le k, 1 \le j \le N^{2|J_i|}\}$ is a family of free semicircular elements.
In this step, we will show the following lemma.
\begin{lemma}
Under assumptions in Theorem \ref{Thm-1}, we have
then for all $t \ge 0$, we have
\begin{align*}
    \bP \left( \sp(X_N) \subseteq \sp(X_{N,\free}) + C N^{-\alpha/4} \Gamma^{1/4} \Theta^{1/4} \left( \ln^{3/4} \left( dN^m \right) + t \right) [-1,1] \right) \ge 1 - e^{-t^2},
\end{align*}
and
\begin{align*}
    \bP \left( \left\| X_N \right\| > \left\| X_{N,\free} \right\| + C N^{-\alpha/4} \Gamma^{1/4} \Theta^{1/4} \left( \ln^{3/4} \left( dN^m \right) + t \right) \right)
    \le e^{-t^2},
\end{align*}
for a universal positive constant $C$.
\end{lemma}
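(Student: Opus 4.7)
The plan is to apply Lemma \ref{Lem:vanHandel} directly to the Gaussian-linear representation of $X_N$ established in Step 1, namely $X_N = \sum_{i=1}^k \sum_{j=1}^{N^{2|J_i|}} g_{i,j}\, C_{i,j}$ with $C_{i,j} := N^{-|J_i|/2}\, B_i \otimes (A_{i,j} \tilde\otimes_{J_i} I_N^{\otimes(m-|J_i|)})$. Replacing the i.i.d.\ standard real Gaussians $g_{i,j}$ by free semicircular elements $s_{i,j}$ produces exactly $X_{N,\free}$, so the free companion appearing in Lemma \ref{Lem:vanHandel} is precisely the $X_{N,\free}$ of the present statement. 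Since $X_N$ has size $dN^m$, the factor $\ln^{3/4} N$ in Lemma \ref{Lem:vanHandel} is replaced by $\ln^{3/4}(dN^m)$. Hence the entire lemma reduces to the single control-parameter estimate $u(X_N) \le C'\, N^{-\alpha/4}\, \Gamma^{1/4}\, \Theta^{1/4}$, which is then absorbed into the universal constant.

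The bound $\sigma(X_N)^2 \le \Gamma$ is the easy half. By the multiplicativity in Lemma \ref{Lem-1}(2) we have $(A_{i,j} \tilde\otimes_{J_i} I)^2 = A_{i,j}^2 \tilde\otimes_{J_i} I$, and since $\{A_{i,j}\}_j$ is the standard orthonormal Hermitian basis of $\bM_{N^{|J_i|}}(\bC)$, the normalization $\bE[X_{J_i}^2] = I$ gives $\sum_j A_{i,j}^2 = N^{|J_i|}\, I_{N^{|J_i|}}$. Hence $\sum_{i,j} C_{i,j}^2 = \sum_i B_i^2 \otimes I_{N^m}$, whose operator norm is at most $\Gamma$.

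The main work is the bound $v(X_N)^2 \le N^{-\alpha}\, \Theta$. Combining Lemma \ref{Lem-2} with the standard permutation identity $\iota(A \otimes B) = P(\iota(A) \otimes \iota(B))$ relating two vectorizations of $A \otimes B$, one writes $\iota(C_{i,j}) = N^{-|J_i|/2}\, V_i\, \bigl(\iota(B_i) \otimes \iota(A_{i,j}) \otimes \iota(I_N^{\otimes(m-|J_i|)})\bigr)$ for a permutation matrix $V_i$ that depends only on $J_i$. A direct computation from the GUE covariance gives $\sum_j \iota(A_{i,j})\iota(A_{i,j})^* = I_{N^{2|J_i|}}$, so summing over $j$ turns the middle tensor factor into an identity. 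The deterministic rank-one positive operator $\iota(I_N^{\otimes(m-|J_i|)})\iota(I_N^{\otimes(m-|J_i|)})^*$ has operator norm $N^{m-|J_i|}$, and hence is dominated in L\"owner order by $N^{m-|J_i|} I$. After these two replacements the last two tensor slots are filled by identity on $\bC^{N^{2m}}$, so the $J_i$-dependent part of $V_i$ acts trivially and only one overall permutation $P$ survives, independent of $i$. This yields the operator inequality $\sum_j \iota(C_{i,j})\iota(C_{i,j})^* \le N^{m-2|J_i|}\, P\, (\iota(B_i)\iota(B_i)^* \otimes I_{N^{2m}})\, P^*$, and summing over $i$ while invoking $m-2|J_i| \le -\alpha$ produces $v(X_N)^2 \le N^{-\alpha}\, \|\sum_i \iota(B_i)\iota(B_i)^*\| \le N^{-\alpha}\, \Theta$.

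Combining the two estimates, $u(X_N) = (\sigma(X_N)\, v(X_N))^{1/2} \le N^{-\alpha/4}\, \Gamma^{1/4}\, \Theta^{1/4}$, and Lemma \ref{Lem:vanHandel} with matrix dimension $dN^m$ closes the argument. The main subtlety is the $v$-bound: each $V_i$ is a priori a different permutation, but this is defused by the observation that identity tensor factors are invariant under permutations of their slots, so after L\"owner majorization only one overall permutation $P$ persists and the $B_i$ contributions cleanly aggregate into $\Theta$.
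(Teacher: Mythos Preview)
Your proof is correct and follows the same overall route as the paper: express $X_N$ as a Gaussian-linear combination, bound the control parameters $\sigma(X_N)^2\le\Gamma$ and $v(X_N)^2\le N^{-\alpha}\Theta$, and invoke Lemma \ref{Lem:vanHandel} with ambient dimension $dN^m$. The $\sigma$-computation is identical to the paper's.

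The only difference is in the $v$-bound. The paper, after pulling the $B_i$-factor out via the permutation identity, applies the triangle inequality to split the sum over $i$ and bounds each summand separately by $N^{m-2|J_i|}\|\iota(B_i)\iota(B_i)^*\|$, obtaining $\sum_i N^{m-2|J_i|}\|\iota(B_i)\iota(B_i)^*\|\le N^{-\alpha}\Theta$. You instead keep the sum together by L\"owner majorization, using the observation that the $J_i$-dependent part of $V_i$ acts only on the tensor slots that have become identity after the two replacements, so a single permutation $P$ (independent of $i$) survives; this yields the slightly sharper intermediate bound $N^{-\alpha}\bigl\|\sum_i \iota(B_i)\iota(B_i)^*\bigr\|$, which is then bounded by $N^{-\alpha}\Theta$ via the triangle inequality anyway. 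Your factorization $V_i=P'(I_{d^2}\otimes U_i)$ is exactly what makes this work, and it is correct. The two arguments are thus minor variants of the same computation; yours avoids one early triangle-inequality step at the cost of tracking the permutation structure a bit more carefully, but neither buys anything over the other for the stated lemma.
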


\begin{proof}
By the definition \eqref{def-sigma}, we obtain
\begin{align} \label{ineq-sigma-1}
    \sigma(X_N)^2 &= \left\| \sum_{i=1}^k N^{-|J_i|} \sum_{j=1}^{N^{2|J_i|}} \left( B_i \otimes \left( A_{i,j} \tilde \otimes_{J_i} I_N^{\otimes (m-|J_i|)} \right) \right)^2 \right\| \nonumber \\
    & = \left\| \sum_{i=1}^k N^{-|J_i|} \left( B_i \right)^2 \otimes \sum_{j=1}^{N^{2|J_i|}} \left( A_{i,j} \tilde \otimes_{J_i} I_N^{\otimes (m-|J_i|)} \right)^2 \right\| \nonumber \\
    & = \left\| \sum_{i=1}^k N^{-|J_i|} \left( B_i \right)^2 \otimes \left( \sum_{j=1}^{N^{2|J_i|}} \left( A_{i,j} \right)^2 \tilde \otimes_{J_i} I_N^{\otimes (m-|J_i|)} \right) \right\|,
\end{align}
where we use Lemma \ref{Lem-1} in the third line.
For $1 \le i \le k$, we have
\begin{align*}
    \left( E_{ll}^{(N^{|J_i|})} \right)^2 = E_{ll}^{(N^{|J_i|})}, \forall 1 \le l \le N^{|J_i|},
\end{align*}
and
\begin{align*}
    \left( E_{rs}^{(N^{|J_i|})} \right)^2
    = \left( \tilde E_{rs}^{(N^{|J_i|})} \right)^2
    = \dfrac{1}{2} \left( E_{rr}^{(N^{|J_i|})} + E_{ss}^{(N^{|J_i|})} \right),
    \quad \forall 1 \le r<s \le N^{|J_i|}.
\end{align*}
Hence, by Lemma \ref{Lem-1}, we obtain
\begin{align} \label{ineq-sigma-3}
    \sum_{j=1}^{N^{2|J_i|}} \left( A_{i,j} \right)^2 \tilde \otimes_{J_i} I_N^{\otimes (m-|J_i|)}
    =& \left( \sum_{j=1}^{N^{2|J_i|}} \left( A_{i,j} \right)^2 \right) \tilde \otimes_{J_i} I_N^{\otimes (m-|J_i|)} \nonumber \\
    =& N^{|J_i|} I_{N^{|J_i|}} \tilde \otimes_{J_i} I_N^{\otimes (m-|J_i|)}
    = N^{|J_i|} I_{N^m}.
\end{align}
Substituting \eqref{ineq-sigma-3} to \eqref{ineq-sigma-1}, we get
\begin{align} \label{ineq-sigma}
    \sigma(X_N)^2
    = \left\| \sum_{i=1}^k \left( B_i \right)^2 \otimes I_{N^m} \right\|
    = \left\| \sum_{i=1}^k \left( B_i \right)^2 \right\|
    \le \Gamma.
\end{align}

By the definition \eqref{def-v}, we have
\begin{align} \label{ineq-v-1}
    &v(X_N)^2 = \left\| \sum_{i=1}^k N^{-|J_i|} \sum_{j=1}^{N^{2|J_i|}} \iota \left( B_i \otimes \left( A_{i,j} \tilde \otimes_{J_i} I_N^{\otimes (m-|J_i|)} \right) \right) \iota \left( B_i \otimes \left( A_{i,j} \tilde \otimes_{J_i} I_N^{\otimes (m-|J_i|)} \right) \right)^* \right\| \nonumber \\
    & = \left\| \sum_{i=1}^k N^{-|J_i|} \sum_{j=1}^{N^{2|J_i|}} \left( \iota \left( B_i \right) \otimes \iota \left( A_{i,j} \tilde \otimes_{J_i} I_N^{\otimes (m-|J_i|)} \right) \right) \left( \iota \left( B_i \right) \otimes \iota \left( A_{i,j} \tilde \otimes_{J_i} I_N^{\otimes (m-|J_i|)} \right) \right)^* \right\| \nonumber \\
    & = \left\| \sum_{i=1}^k N^{-|J_i|} \left( \iota (B_i) \iota (B_i)^* \right) \otimes \sum_{j=1}^{N^{2|J_i|}} \iota \left( A_{i,j} \tilde \otimes_{J_i} I_N^{\otimes (m-|J_i|)} \right) \iota \left( A_{i,j} \tilde \otimes_{J_i} I_N^{\otimes (m-|J_i|)} \right)^* \right\|,
\end{align}
where we use Lemma \ref{Lem-2} in the second line with the permutation matrix being absorbed by the operator norm.

Note that
\begin{align*}
    \iota \left( E_{ll}^{(N^{|J_i|})} \right) \iota \left( E_{ll}^{(N^{|J_i|})} \right)^*
    = E_{l'l'}^{(N^{2|J_i|})},
\end{align*}
and
\begin{align*}
    \iota \left( E_{rs}^{(N^{|J_i|})} \right) \iota \left( E_{rs}^{(N^{|J_i|})} \right)^*
    + \iota \left( \tilde E_{rs}^{(N^{|J_i|})} \right) \iota \left( \tilde E_{rs}^{(N^{|J_i|})} \right)^*
    = E_{r'r'}^{(N^{2|J_i|})} + E_{s's'}^{(N^{2|J_i|})},
\end{align*}
where $l'$ is the position of the 1 in the vector $\iota \left( E_{ll}^{(N^{|J_i|})} \right)$, and $r',s'$ are the positions of the 1 in the vectors $\iota \left( e_r^{(N^{|J_i|})} \left( e_s^{(N^{|J_i|})} \right)^* \right)$ and $\iota \left( e_s^{(N^{|J_i|})} \left( e_r^{(N^{|J_i|})} \right)^* \right)$.
Hence, we have
\begin{align} \label{ineq-v-2}
    \sum_{j=1}^{N^{2|J_i|}} \iota \left( A_{i,j}  \right) \iota \left( A_{i,j} \right)^* = I_{N^{2|J_i|}}.
\end{align}
Thus, by Lemma \ref{Lem-2}, there exists a permutation matrix $U_i$, such that
\begin{align} \label{eq-v-4}
    & \sum_{j=1}^{N^{2|J_i|}} \iota \left( A_{i,j} \tilde \otimes_{J_i} I_N^{\otimes (m-|J_i|)} \right) \iota \left( A_{i,j} \tilde \otimes_{J_i} I_N^{\otimes (m-|J_i|)} \right)^* \nonumber \\
    & = \sum_{j=1}^{N^{2|J_i|}} U_i \left( \iota \left( A_{i,j} \right) \otimes \iota \left( I_N^{\otimes (m-|J_i|)} \right) \right) \left( \iota \left( A_{i,j} \right) \otimes \iota \left( I_N^{\otimes (m-|J_i|)} \right) \right)^* U_i^* \nonumber \\
    & = U_i \left( \left( \sum_{j=1}^{N^{2|J_i|}} \iota \left( A_{i,j} \right) \iota \left( A_{i,j} \right)^* \right) \otimes \left( \iota \left( I_N^{\otimes (m-|J_i|)} \right)  \iota \left( I_N^{\otimes (m-|J_i|)} \right)^* \right) \right) U_i^* \nonumber \\
    & = U_i \left( I_{N^{2|J_i|}} \otimes \left( \iota \left( I_N^{\otimes (m-|J_i|)} \right)  \iota \left( I_N^{\otimes (m-|J_i|)} \right)^* \right) \right) U_i^*,
\end{align}
where we substitute \eqref{ineq-v-2} in the last line.

Besides, applying Lemma \ref{Lem-2}, we obtain
\begin{align} \label{ineq-v-3}
    &\left\| \iota \left( I_N^{\otimes (m-|J_i|)} \right) \iota \left( I_N^{\otimes (m-|J_i|)} \right)^* \right\|
    = \left\| \iota \left( I_N \right)^{\otimes (m-|J_i|)} \left( \iota \left( I_N \right)^{\otimes (m-|J_i|)} \right)^* \right\| \nonumber \\
    &= \left\| \left( \iota \left( I_N \right) \iota \left( I_N \right)^* \right)^{\otimes (m-|J_i|)} \right\|
    = \left\| \iota \left( I_N \right) \iota \left( I_N \right)^* \right\|^{m-|J_i|}
    = N^{m-|J_i|},
\end{align}
where the permutation matrix is absorbed by the operator norm.
Substituting \eqref{eq-v-4} and \eqref{ineq-v-3} to \eqref{ineq-v-1}, we obtain
\begin{align} \label{ineq-v}
    v(X_N)^2
    & \le \sum_{i=1}^k N^{-|J_i|} \left\| \iota (B_i) \iota (B_i)^* \right\| \left\| \sum_{j=1}^{N^{2|J_i|}} \iota \left( A_{i,j} \tilde \otimes_{J_i} I_N^{\otimes (m-|J_i|)} \right) \iota \left( A_{i,j} \tilde \otimes_{J_i} I_N^{\otimes (m-|J_i|)} \right)^* \right\| \nonumber \\
    & \le \sum_{i=1}^k N^{m-2|J_i|} \left\| \iota (B_i) \iota (B_i)^* \right\|
    \le N^{-\alpha} \Theta.
\end{align}
where we use triangle inequality in the first line.

By the definition \eqref{def-u}, we combine \eqref{ineq-sigma} and \eqref{ineq-v} to obtain 
\begin{align} \label{ineq-u}
    u(X_N) \le N^{-\alpha/4} \Gamma^{1/4} \Theta^{1/4}.
\end{align}
Therefore, applying Lemma \ref{Lem:vanHandel}, we have
\begin{align*}
    &\bP \left( \sp(X_N) \subseteq \sp(X_{N,\free}) + C N^{-\alpha/4} \Gamma^{1/4} \Theta^{1/4} \left( \ln^{3/4} \left( dN^m \right) + t \right) [-1,1] \right) \\
    &\ge \bP \left( \sp(X_N) \subseteq \sp(X_{N,\free}) + C u(X_N) \left( \ln^{3/4} \left( dN^m \right) + t \right) [-1,1] \right)
    \ge 1 - e^{-t^2},
\end{align*}
and
\begin{align*}
    &\bP \left( \left\| X_N \right\| > \left\| X_{N,\free} \right\| + C N^{-\alpha/4} \Gamma^{1/4} \Theta^{1/4} \left( \ln^{3/4} \left( dN^m \right) + t \right) \right) \\
    &\le \bP \left( \left\| X_N \right\| > \left\| X_{N,\free} \right\| + C u(X_N) \left( \ln^{3/4} \left( dN^m \right) + t \right) \right)
    \le e^{-t^2},
\end{align*}
for any $t \ge 0$.
\end{proof}

{\bf Step 3.} In the last step, we conclude the proof via Lemma \ref{lem-BBvH 7.9}.

Note that for any $1 \le i \le k$, $X_{J_i}$ is a GUE matrix with dimension $N^{|J_i|}$, we have
\begin{align*}
    \bE \left[ X_{J_i} \tilde \otimes_{J_i} I_N^{\otimes (m-|J_i|)} \right] = 0
\end{align*}
and
\begin{align*}
    \bE \left[ \left( X_{J_i} \tilde \otimes_{J_i} I_N^{\otimes (m-|J_i|)} \right)^2 \right]
    = \bE \left[ \left( X_{J_i} \right)^2 \tilde \otimes_{J_i} I_N^{\otimes (m-|J_i|)} \right]
    = I_{N^{|J_i|}} \tilde  \otimes_{J_i} I_N^{\otimes (m-|J_i|)}
    = I_{N^m},
\end{align*}
where we use Lemma \ref{Lem-1} for the second formula. Then we apply Lemma \ref{lem-BBvH 7.9} to obtain
\begin{align*}
    \sp \left( X_{N,\free} \right) = \sp \left( X_\free \right).
\end{align*}
Then, the inequality \eqref{ineq-spectrum} follows from this equality and the conclusion of Step 2. The inequality \eqref{ineq-norm} follows immediately from \eqref{ineq-spectrum}.

\begin{remark}
Lemma \ref{lem-BBvH 7.9} implies that $\sp (X_{N,\free})$ does not depends on $N$. Hence, we obtain
\begin{align*}
    \bP \left( \sp(X_N) \subseteq \sp(X_{2,\free}) + C N^{-\alpha/4} \Gamma^{1/4} \Theta^{1/4} \left( \ln^{3/4} \left( dN^m \right) + t \right) [-1,1] \right) \ge 1 - e^{-t^2},
\end{align*}
and
\begin{align*}
    \bP \left( \left\| X_N \right\| > \left\| X_{2,\free} \right\| + C N^{-\alpha/4} \Gamma^{1/4} \Theta^{1/4} \left( \ln^{3/4} \left( dN^m \right) + t \right) \right)
    \le e^{-t^2}.
\end{align*}
Comparing with $X_\free$, the quantity $X_{2,\free}$ may be better in some cases as it reveals the relationship among the free semicircular family in $X_\free$.
\end{remark}

\section{Proof of Theorem \ref{Thm-2}} \label{sec:proof 2}

We use a similar idea as the proof of part b of \cite{BBvH2023}*{Theorem 2.10}. We sketch the proof below.

For any fixed $k,d$ and $d \times d$ self-adjoint deterministic matrices $B_1,\ldots,B_k$, we have $\Gamma= O_N(1)$ and $\Theta = O_N(1)$. Thus, $N^{-\alpha/4} \Gamma^{1/4} \Theta^{1/4} \ln^{3/4} (dN^m) = o_N(1)$. Hence, by Corollary \ref{Coro-2}, for any $\epsilon>0$, we have
\begin{align*}
    \sp(X_N) \subseteq \sp(X_{\free}) + (-\epsilon,\epsilon),
\end{align*}
eventually as $N \to \infty$, almost surely. Then we apply Lemma \ref{lem-BBvH 7.7} to obtain
\begin{align} \label{ineq-free-limsup}
    \limsup_{N \to \infty} \left\| P \left( X_{J_1} \tilde\otimes_{J_1} I_N^{\otimes (m-|J_1|)}, \ldots, X_{J_k} \tilde\otimes_{J_k} I_N^{\otimes (m-|J_k|)} \right) \right\|
    \le \left\| P \left( s_1,\ldots,s_k \right) \right\|
\end{align}
almost surely, for any non-commutative polynomial $P$.

We can use the computation of \eqref{ineq-v} for the case $k=d=1$ and $B_1= 1$ to obtain that for any $1 \le i \le m$,
\begin{align*}
    v \left( X_{J_i} \tilde \otimes_{J_i} I_N^{\otimes (m-|J_i|)} \right)
    \le N^{-\alpha/2}
    = o_N \left( \ln^{-3/2} \left( dN^m \right) \right).
\end{align*}
Hence, applying Lemma \ref{lem-BBvH 2.10}, we have the following almost surely convergence
\begin{align*}
    \lim_{N \to \infty} \tr \left( P \left( X_{J_1} \tilde \otimes_{J_1} I_N^{\otimes (m-|J_1|)}, \ldots, X_{J_k} \tilde \otimes_{J_k} I_N^{\otimes (m-|J_k|)} \right) \right)
    = \tau \left( P \left( s_1,\ldots,s_k \right) \right)
\end{align*}
for any non-commutative polynomial $P$. Thus, for any even positive integer $r$, we have almost surely,
\begin{align*}
    & \liminf_{N \to \infty} \left\| P \left( X_{J_1} \tilde\otimes_{J_1} I_N^{\otimes (m-|J_1|)}, \ldots, X_{J_k} \tilde\otimes_{J_k} I_N^{\otimes (m-|J_k|)} \right) \right\| \\
    & \ge \liminf_{N \to \infty} \left( \tr \left( \left| P \left( X_{J_1} \tilde \otimes_{J_1} I_N^{\otimes (m-|J_1|)}, \ldots, X_{J_k} \tilde \otimes_{J_k} I_N^{\otimes (m-|J_k|)} \right) \right|^r \right) \right)^{1/r} \\
    & = \left( \tau \left( \left| P \left( s_1,\ldots,s_k \right) \right|^r \right) \right)^{1/r},
\end{align*}
noting that $|P|^r$ is a non-commutative polynomial. Hence,
\begin{align} \label{ineq-free-liminf}
    & \liminf_{N \to \infty} \left\| P \left( X_{J_1} \tilde\otimes_{J_1} I_N^{\otimes (m-|J_1|)}, \ldots, X_{J_k} \tilde\otimes_{J_k} I_N^{\otimes (m-|J_k|)} \right) \right\| \nonumber \\
    & \ge \lim_{r \to \infty} \left( \tau \left( \left| P \left( s_1,\ldots,s_k \right) \right|^r \right) \right)^{1/r}
    = \left\| P \left( s_1,\ldots,s_k \right) \right\|.
\end{align}

The proof is concluded by combining \eqref{ineq-free-limsup} and \eqref{ineq-free-liminf}.

\begin{remark}
For $1 \le i \le k$, we define
\begin{align*}
    S_{J_i} = 2^{-|J_i|/2} \left( \sum_{l=1}^{2^{|J_i|}} E_{ll}^{(2^{|J_i|})} \otimes s_{i,l} + \sum_{1 \le r < s \le 2^{|J_i|}} \left( E_{rs}^{(2^{|J_i|})} \otimes s_{i,rs} + \tilde E_{rs}^{(2^{|J_i|})} \otimes s_{i,rs}' \right) \right),
\end{align*}
where $\{s_{i,l}, s_{i,rs}, s_{i,rs}': 1 \le i \le k, 1 \le l \le 2^{|J_i|}, 1 \le r < s \le 2^{|J_i|}\}$ is a family of free semicircular elements. Then
\begin{align*}
    X_{2,\free} = \sum_{i=1}^k B_i \otimes \left( S_{J_i} \tilde \otimes_{J_i} I_2^{\otimes (m-|J_i|)} \right).
\end{align*}
By Lemma \ref{lem-BBvH 7.9}, one can obtain that for any non-commutative polynomial $P$,
\begin{align*}
    \lim_{N \to \infty} \left\| P \left( X_{J_1} \tilde\otimes_{J_1} I_N^{\otimes (m-|J_1|)}, \ldots, X_{J_k} \tilde\otimes_{J_k} I_N^{\otimes (m-|J_k|)} \right) \right\|
    = \left\| P \left( S_{J_1},\ldots,S_{J_k} \right) \right\|,
\end{align*}
almost surely.
\end{remark}

\section{Open questions and perspectives}

In this paper, we proved that strong asymptotic freeness holds in a case beyond Haagerup and Thorbj{\o}rnsen's result (\cite{Haagerup2005}), after introducing interaction -- in the sense of quantum systems -- with independent systems of small size. 
Our results leave a handful of questions open, which we list up below: 

(1) Can one handle more general dimensions (less constraint on the size of tensors)? %if possible including all tensors of same size
Let us point out here that a very recent preprint \cite{vH2024} developed new highly original moment techniques to compute norm convergence. At first sight it was not clear though that it could allow us to make straightforward progress towards understanding our initial matrix  model.

(2) Can one handle the case where $X_i$ are all the same but acting on different legs, rather than i.i.d.? This question seems to be the original question from the point of view of quantum mechanics.
An application of \cite{BBvH2023} allowed us to compare with an operator algebraic object but although we suspect freeness, we were not able to confirm it.

(3) Can we handle the case where some $X_i$, $X_j$ are ``remote'', i.e. commute? That would also be used to address satisfactorily the initial problem in quantum physics. %{\red the case $|J_i|\le m/2$? maybe a link to P-T conjecture? The case $|J_i|=\{1,\ldots,m/2\}$ and $J_j=\{m/2+1,\ldots,m\}$.}

Finally, it would be interesting to consider the same problem not only for nearest-neighbor interaction, but for any kind of graph-type interaction. While this looks challenging, this is the most natural question from the point of view of epsilon-freeness, and it would allow to treat more models in quantumm mechanics, such as nearest neighbors interaction not only in one dimension, but in higher dimensions. 

%next work by Bordenave, Collins and Yuan (maybe too early to mention?)

%references to add: 
%https://www.cambridge.org/core/journals/canadian-journal-of-mathematics/article/freeness-and-the-partial-transposes-of-wishart-random-matrices/9C807D7530735A92D822D97BFB46393C
%recent preprint by van Handel et al
%eps freeness (Charlesworth collins)
%Magee Thomas (a model of strong freeness for eps-free)
%some papers of Bordenave C (that deal with tensors) in the intro. 
%Collins Yao Yuan? etc (from the intro)

\bibliographystyle{plain}
\bibliography{tensor}

\end{document}